\documentclass[12pt]{article}
\usepackage[cp1251]{inputenc}
\usepackage[T1]{fontenc}
\usepackage{amsmath,amsthm}
\usepackage{amsfonts,amssymb,enumerate}
\usepackage{url,paralist}
\usepackage{mathtools}
\usepackage[colorlinks=true,urlcolor=blue,linkcolor=red,citecolor=magenta]{hyperref}
\usepackage{enumerate}
\usepackage{tikz}

\usetikzlibrary{arrows,snakes,backgrounds,calc,math}
\tikzstyle{map}=[->,semithick]
\tikzstyle{arc}=[bend left,->,semithick]
\tikzstyle{rinclusion}=[right hook->,semithick]
\tikzstyle{linclusion}=[left hook->,semithick]

\usepackage{dsfont}
\usepackage[left=1in,right=1in,top=1in,bottom=1in]{geometry}

\theoremstyle{plain}
\newtheorem{theorem}{Theorem}[section]
\newtheorem{lemma}[theorem]{Lemma}
\newtheorem{corollary}[theorem]{Corollary}

\theoremstyle{definition}
\newtheorem{definition}[theorem]{Definition}

\newtheorem{remark}[theorem]{Remark}
\newtheorem{problem}[theorem]{Problem}

\newcommand{\R}{\mathbb{R}}
\newcommand{\Z}{\mathbb{Z}}

\newcommand{\cB}{\mathcal{B}}

\newcommand{\cN}{\mathcal{N}}
\newcommand{\cR}{\mathcal{R}}

\newcommand{\gh}{\mathrm{GH}}
\newcommand{\h}{\mathrm{H}}

\newcommand{\vr}[2]{\mathrm{VR}(#1;#2)}
\newcommand{\cech}[2]{\mathrm{\check{C}}(#1;#2)}

\newcommand{\og}{\overline{g}}
\newcommand{\oh}{\overline{h}}

\newcommand{\tig}{\tilde{g}}
\newcommand{\tih}{\tilde{h}}

\newcommand{\rom}[1]{{\em #1}}

\renewcommand{\)}{\rom)}

\renewcommand{\:}{\colon}
\newcommand{\0}{\emptyset}

\newcommand{\sm}{\setminus}
\let\sss\ss
\renewcommand{\ss}{\subseteq}
\renewcommand{\sp}{\supseteq}
\newcommand{\x}{\times}

\renewcommand{\a}{\alpha}
\renewcommand{\b}{\beta}
\newcommand{\D}{\Delta}

\newcommand{\e}{\varepsilon}
\renewcommand{\i}{\iota}
\renewcommand{\l}{\lambda}
\renewcommand{\r}{\rho}
\newcommand{\s}{\sigma}
\renewcommand{\S}{\Sigma}

\newcommand{\bB}{{\bar B}}

\newcommand{\cP}{\mathcal{P}}

\newcommand{\conv}{\operatorname{conv}}
\newcommand{\diam}{\operatorname{diam}}
\newcommand{\dis}{\operatorname{dis}}

\newcommand{\VR}{\operatorname{VR}}

\newcommand{\hhC}{\check{C}}

\def\ig#1#2#3#4{\begin{figure}[!ht]\begin{center}%
\includegraphics[height=#2\textheight]{#1.pdf}\caption{#4}\label{#3}%
\end{center}\end{figure}}
\title{Gromov--Hausdorff distance and Jung constant of finite-dimensional normed spaces}
\author{Henry Adams, Semeon A.~Bogatyi, Florian Frick,\\ Daniil A.~Ilyukhin, Alexander O.~Ivanov, Ivan N.~Mikhailov,\\ Alexey A.~Tuzhilin, Anton A.~Vikhrov}

\begin{document}
\maketitle

\begin{abstract}
For a finite-dimensional normed space $V$ and a subset $X$ with finite Hausdorff distance from $V$, we prove that the Gromov--Hausdorff distance between $X$ and $V$ is at least the Hausdorff distance between $X$ and $V$, divided by twice the relative Jung constant of $V$. If $V$ furthermore satisfies a certain intersection property, we show a stronger result where the relative Jung constant can be replaced with its absolute version.

\textbf{Key words:} Normed spaces, Jung constant, Hausdorff distance, Gromov--Hausdorff distance.
\end{abstract}

\tableofcontents

%%%%%%%%%%%%%%%%%%%%%%%%%%%%%%
\section{Introduction}
%%%%%%%%%%%%%%%%%%%%%%%%%%%%%%
The Gromov--Hausdorff distance measures how different one metric space is from another: if the spaces are isometric, the distance is zero~\cite{BBI}.
As a rule, computing specific values of this distance is quite difficult~\cite{GrigorevIvanovTuzhilin,JiTuzhilin,LimMemoliSmith,harrison2023quantitative,Martin,GH-BU-VR}.
If the spaces are subsets of the same metric space, then the Gromov--Hausdorff distance between them is bounded above by the Hausdorff distance.
An alternative definition of the Gromov--Hausdorff distance gives that it is one half the infimum of distortions of all correspondences between the spaces~\cite{BBI}, so an upper bound can be obtained by computing the distortion of some correspondence.
However, obtaining lower bounds is a much more delicate problem.
One approach is to pass to the corresponding ultrametric spaces~\cite{LimMemoliSmith}, but this often yields very coarse estimates.
In~\cite{HvsGH}, methods of algebraic topology were applied to obtain a lower bound for the Gromov--Hausdorff distance via the Hausdorff distance in the case of closed manifolds and their subsets.
In~\cite{AdamsMajhiManinVirkZava}, finite metric graphs and their subsets were considered.
It was proved that, under certain restrictions on the subset, the Gromov--Hausdorff distance from the subset to the whole graph equals the Hausdorff distance.
In~\cite{IvaMikhTuz}, this result was extended to infinite trees.

In the present paper, we consider arbitrary finite-dimensional normed spaces and their subsets at finite Hausdorff distance.
In~\cite{MikhTuz} it was shown that an arbitrary subset of a finite-dimensional Euclidean space lies at a finite Hausdorff distance from the ambient space, if and only if it lies at a finite Gromov--Hausdorff distance from it.
We enhance, and estimate from below the Gromov--Hausdorff distance from a subset to the ambient space in terms of the Hausdorff distance and Jung constant of the space in case of an arbitrary norm.
We start with a relatively short proof of a weak estimate using methods of functional analysis.
Then we generalize the approach from~\cite{HvsGH} and obtain a stronger estimate.
As a consequence we obtain: 

\begin{theorem}
\label{thm:sup-equal}
For any finite-dimensional normed space $V$ endowed with sup-norm, and any subset $X \subseteq V$ such that $d_\h(X,V)<\infty$, we have $d_\gh(X,V)=d_\h(X,V)$.
\end{theorem}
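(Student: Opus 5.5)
The inequality $d_\gh(X,V)\le d_\h(X,V)$ holds because $X$ and $V$ sit in the common space $V$. The content is the reverse inequality $d_\gh(X,V)\ge d_\h(X,V)$ for $V=(\R^n,\|\cdot\|_\infty)$. I would get it as a special case of the paper's stronger estimate, the one in which the relative Jung constant is replaced by the absolute one. That estimate should read
\[
d_\gh(X,V)\ \ge\ \frac{d_\h(X,V)}{2J(V)}
\]
and holds for spaces with the required ball-intersection property. So the plan has two steps: show that $\ell_\infty^n$ has that intersection property, and show that its absolute Jung constant equals $1/2$. The conclusion then follows at once.

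For the Jung constant, take any bounded set $A\subseteq\R^n$ of diameter $D$ in the sup-norm. In each coordinate $i$ the projection of $A$ lies in an interval of length at most $D$. The center of the coordinate box $\prod_i[\inf_A x_i,\sup_A x_i]$ is within sup-distance $D/2$ of every point of $A$. So the Chebyshev radius is at most $D/2$, and $J(\ell_\infty^n)=1/2$, the smallest value possible for any normed space. The same box argument also gives the relative Jung constant as $1/2$, because the Chebyshev center can be taken in the convex hull of $A$. Hence even the weaker relative estimate would already give equality if it applies to all of $V$.

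For the intersection property, I expect the needed condition to be a Helly-type or binary intersection property: any family of pairwise intersecting closed balls has nonempty total intersection. This is exactly what lets the argument generalized from \cite{HvsGH} replace relative Chebyshev centers with absolute ones. In $\ell_\infty^n$, balls are coordinate boxes with equal side lengths. Pairwise intersection of boxes means the intervals intersect pairwise in each coordinate. By the one-dimensional Helly theorem, the intervals in each coordinate have a common point, and the product of these points lies in every ball. Compactness handles infinite families, since closed bounded boxes are compact. So $\ell_\infty^n$ is hyperconvex / has the binary intersection property.

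The main obstacle is matching the exact hypotheses of the paper's stronger theorem, since the precise form of the intersection property is not yet visible. If it is phrased differently, for instance as a property of $\varepsilon$-neighbourhoods of finite sets, I would derive it from hyperconvexity of $\ell_\infty^n$. Failing that, I would fall back on the relative-Jung-constant estimate, with the relative constant computed as $1/2$ above. Either way the constant becomes $2\cdot\frac12=1$, so $d_\gh(X,V)\ge d_\h(X,V)$ and equality holds.
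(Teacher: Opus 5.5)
Your plan has the same architecture as the paper's proof: the trivial bound $d_\gh(X,V)\le d_\h(X,V)$, then the strong estimate $d_\gh(X,V)\ge d_\h(X,V)/(2J(V))$ with $J(\ell_\infty^n)=1/2$. Your bounding-box computation of $J$ is correct. The gap is that you verify the wrong hypothesis for the strong estimate.

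The intersection property it requires is topological, not Helly-type. For every $\rho>0$ there must be $t>0$ such that, for all $t'>t$ and every finite family of open balls $B(x_i;r_i)$ with $r_i\le\rho$, the set $\bigl(V\setminus\bar B(0;t')\bigr)\cap\bigcap_i B(x_i;r_i)$ is empty or contractible. This is what makes the covers of $S^n=V\cup\{N\}$ by balls plus the neighbourhood $V\setminus\bar B(0;t')$ of infinity good. The nerve lemma then identifies the outer complexes with $S^n$ and the middle one with a proper open subset. The passage from relative to absolute centres comes from using ambient \v{C}ech complexes in this nerve argument, not from Helly.

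The binary intersection property only asserts that certain intersections of balls are non-empty; in effect it just re-proves $J=1/2$. It says nothing about contractibility of intersections with the non-convex set $V\setminus\bar B(0;t')$, so your proposal never establishes the required hypothesis. What is missing is the argument of Lemma~\ref{lem:MaxNorm}:
\begin{itemize}
\item Take $t>\rho$. Then $P=\bigcap_i B(x_i;r_i)$ is an open axis-parallel box with all sides less than $2t'$.
\item Hence for each $j$ it meets at most one of $\{x^j>t'\}$ and $\{x^j<-t'\}$. After reflecting coordinates, $\bigl(V\setminus\bar B(0;t')\bigr)\cap P=\bigcup_{j\in I}\bigl(P\cap\{x^j>t'\}\bigr)$, a finite union of convex sets.
\item Choose $A_j\in P\cap\{x^j>t'\}$. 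The coordinatewise maximum of the $A_j$ lies in $P$ and in every $\{x^j>t'\}$; this is where your coordinatewise Helly reasoning actually belongs.
\item So the union is star-shaped, hence contractible.
\end{itemize}

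Your fallback also fails, because the claim that the Chebyshev centre can be taken in the convex hull is false. Take $A=\{e_1,e_2,e_3\}\subset\ell_\infty^3$, which has diameter $1$. Its bounding-box centre $(1/2,1/2,1/2)$ is not in $\conv A$. Every point of $\conv A$ has some coordinate at most $1/3$, so it lies at distance at least $2/3$ from the corresponding $e_i$; thus $J_s(\ell_\infty^3)\ge 2/3$. In fact $J_s(\ell_\infty^n)=(n-1)/n$ for $n\ge2$ (Berdyshev). The relative estimate therefore gives equality only for $n\le 2$, and for $n\ge3$ only $d_\gh(X,V)\ge\frac{n}{2(n-1)}d_\h(X,V)$. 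So for $n\ge 3$ the theorem genuinely rests on the topological intersection property, which your proposal leaves unproved.
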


This will follow immediately from the more general Theorem~\ref{thm:gh-lower-jung-new} combined with Lemma~\ref{lem:MaxNorm}.\\

%that for a space with the $\max$-norm, the Gromov--Hausdorff and Hausdorff distances coincide.

Henry Adams was supported by the Simons Foundation Travel Support for Mathematicians.
Florian Frick was supported by NSF CAREER Grant DMS 2042428.
Ivan N. Mikhailov was supported by grant No.~25-8-2-11-1 of Theoretical Physics and Mathematics Foundation <<Basis>>.
The work of Alexey A.~Tuzhilin was supported by grant No.\ 25-21-00152 of the Russian Science Foundation, by National Key R\&D Program of China (Grant No.\ 2020YFE0204200), as well as by the Sino-Russian Mathematical Center at Peking University.
Part of the work by Alexey A.~Tuzhilin was done in Sino-Russian Math.\ Center, and he thanks the Math.\ Center for the invitation and the hospitality.

%%%%%%%%%%%%%%%%%%%%%%%%%%%%%%
\section{Preliminary material}
\markright{\thesection.~Preliminary material}
%%%%%%%%%%%%%%%%%%%%%%%%%%%%%%

Let $X$ be an arbitrary metric space.
The distance between points $x,y\in X$ will be denoted by $|xy|$.
Now let $x\in X$, and let $r>0$ and $s\ge 0$ be real numbers.
By $B(x;r)=\bigl\{y\in X:|xy|<r\bigr\}$ and $\bB(x;s)=\bigl\{y\in X:|xy|\le s\bigr\}$ we denote the \emph{open\/} and \emph{closed\/} balls with center $x$ and radii $r$ and $s$, respectively.
If $A$ and $B$ are nonempty subsets of $X$, we set $|xA|=|Ax|=\inf\bigl\{|xa|:a\in A\bigr\}$ and $|AB|=|BA|=\inf\bigl\{|ab|:a\in A,\,b\in B\bigr\}$.
Furthermore, we define the \emph{open $r$-neighborhood\/} and the \emph{closed $s$-neighborhood\/} of a set $A$ by setting respectively
$$
B(A;r)=\bigl\{x\in X:|xA|<r\bigr\}\ \ \text{and}\ \ \bB(A;s)=\bigl\{x\in X:|xA|\le s\bigr\}.
$$

%%%%%%%%%%%%%%%%%%%%%%%%%%%%%%
\subsection{Hausdorff distance}
%%%%%%%%%%%%%%%%%%%%%%%%%%%%%%

For details concerning Hausdorff and Gromov--Hausdorff distances see~\cite{BBI}.

\begin{definition}\label{dfn:H}
For nonempty subsets $A$ and $A'$ of a metric space $X$, the \emph{Hausdorff distance from $A$ to $A'$} is the quantity
\begin{multline*}
d_\h(A,A')=\max\bigl\{\sup_{a\in A}|aA'|,\,\sup_{a'\in A'}|Aa'|\bigr\}\\ =
\inf\bigl\{r>0:A\ss B(A';r)\ \text{and}\ B(A;r)\sp A'\bigr\}\\ =
\inf\bigl\{s\ge0:A\ss\bB(A';s)\ \text{and}\ \bB(A;s)\sp A'\bigr\}.
\end{multline*}
\end{definition}

%%%%%%%%%%%%%%%%%%%%%%%%%%%%%%
\subsection{Gromov--Hausdorff distance}
%%%%%%%%%%%%%%%%%%%%%%%%%%%%%%

If $X$ and $Y$ are isometric metric spaces, we denote this fact by $X\approx Y$.

\begin{definition}\label{dfn:GH}
The \emph{Gromov--Hausdorff distance\/} between nonempty metric spaces $X$ and $Y$ is the quantity
$$
d_\gh(X,Y)=\inf\bigl\{d_\h(X',Y')\:X',Y'\ss Z,\,X'\approx X,\,Y'\approx Y\bigr\},
$$
where the infimum is taken over all metric spaces $Z$ and all isometric embeddings of $X$ and~$Y$ into~$Z$.
\end{definition}

Definition~\ref{dfn:GH} is not well suited for concrete computations.
There is an equivalent definition in terms of correspondences.
We recall the necessary notions and results.

A \emph{relation\/} between sets $X$ and $Y$ is any subset of the Cartesian product $X\times Y$.

\begin{definition}
For any nonempty metric spaces $X$, $Y$ and a nonempty relation $\s\ss X\times Y$, the \emph{distortion $\dis\s$ of the relation $\s$} is the quantity
$$
\dis\s=\sup\Bigl\{\bigl||xx'|-|yy'|\bigr|\:(x,y),\,(x',y')\in\s\Bigr\}.
$$
\end{definition}

A relation $R\ss X\times Y$ is called a \emph{correspondence\/} if for every $x\in X$ there exists $y\in Y$, and for every $y\in Y$ there exists $x\in X$, such that $(x,y)\in R$.
The set of all correspondences between $X$ and $Y$ is denoted by $\cR(X,Y)$.

\begin{theorem}[\cite{BBI}]\label{thm:GH-metri-and-relations}
For any nonempty metric spaces $X$ and $Y$, we have
$$
d_\gh(X,Y)=\frac12\inf\bigl\{\dis R\:R\in\cR(X,Y)\bigr\}.
$$
\end{theorem}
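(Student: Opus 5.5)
Both sides of the equality will be shown to coincide with the infimum over the same family of metric spaces $Z$ containing isometric copies of $X$ and $Y$, via two inequalities.

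\emph{Inequality $d_\gh(X,Y)\le\frac12\inf\dis R$.} Take a correspondence $R$ with $\dis R=2r<\infty$ and glue $X$ and $Y$ along $R$. On the disjoint union $X\sqcup Y$, keep the original metrics on $X$ and on $Y$. For $x\in X$ and $y\in Y$ set
$$
|xy|=\inf\bigl\{|xx'|+r+|y'y|\:(x',y')\in R\bigr\}.
$$
The main work is the triangle inequality for the mixed triples. Take, for instance, $x_1,x_2\in X$ and $y\in Y$. The inequality $|x_1x_2|\le|x_1y|+|yx_2|$ reduces to
$$
|x_1x_2|\le|x_1x_1'|+|y_1'y_2'|+2r+|x_2'x_2|,
$$
which holds because $|x_1'x_2'|\le|y_1'y_2'|+\dis R$. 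The other cases follow by symmetry or directly from the definition. Every pair $(x,y)\in R$ has $|xy|\le r$, so $d_\h(X,Y)\le r$ in $Z$ (after identifying points at distance zero if necessary, or taking $r$ slightly larger to make $Z$ a genuine metric space). Letting $\dis R$ approach the infimum gives the inequality.

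\emph{Inequality $\frac12\inf\dis R\le d_\gh(X,Y)$.} Suppose $X',Y'\subseteq Z$ are isometric copies with $d_\h(X',Y')<r$. Define
$$
R=\bigl\{(x,y)\:|x'y'|<r\bigr\},
$$
where $x',y'$ are the images of $x,y$. This is a correspondence by the definition of the Hausdorff distance. For pairs $(x_1,y_1),(x_2,y_2)\in R$ the triangle inequality in $Z$ gives $\bigl||x_1x_2|-|y_1y_2|\bigr|\le|x_1'y_1'|+|x_2'y_2'|<2r$. Hence $\dis R\le2r$, and taking infima gives the inequality.

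The expected obstacle is the triangle inequality in the gluing step, specifically the case of two points on the same side. It is also necessary to check that the infimum in Definition~\ref{dfn:GH}, taken over all $Z$, is handled set-theoretically, which is routine. The case where every correspondence has infinite distortion is trivial: both sides are then $\infty$, since a finite Hausdorff distance would produce a correspondence of finite distortion by the second step.
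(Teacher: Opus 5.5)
Your proof is correct. The paper does not prove this statement itself; it cites \cite{BBI}, and your argument is essentially the standard proof given there. One inequality comes from gluing $X\sqcup Y$ along $R$ with cross-distances $\inf\bigl\{|xx'|+r+|y'y|\:(x',y')\in R\bigr\}$, where the degenerate case $\dis R=0$ is handled by taking any $r>0$ with $2r\ge\dis R$. The other comes from taking $R=\bigl\{(x,y)\:|x'y'|<r\bigr\}$ for copies $X',Y'$ at Hausdorff distance less than $r$.
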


%%%%%%%%%%%%%%%%%%%%%%%%%%%%%%
\subsection{Jung constant}
%%%%%%%%%%%%%%%%%%%%%%%%%%%%%%
Let $X$ be a non-empty bounded subset of a metric space $Z$.
By the \emph{circumradius\/} or \emph{Chebyshev radius $R(X)$ of $X$} we mean the infimal possible $r$ such that $X$ belongs to some closed ball $\bB(z;r)$ with $z\in Z$:
$$
R(X)=\inf\bigl\{r:\exists\, z\in Z,\,X\ss\bB(z;r)\bigr\}.
$$
Another way to describe this radius is
$$
R(X)=\inf_{z\in Z}\sup_{x\in X}|zx|.
$$
Given a metric space $Z$, let $\cB_{>0}(Z)$ denote the collection of all bounded subsets of $Z$ whose diameter is greater than~$0$, i.e., $\cB_{>0}(Z)$ consists of all bounded $X\ss Z$ containing at least two different points.
\textbf{In this section, we shall always assume that $\cB_{>0}(Z)\ne\0$, i.e., the space $Z$ contains at least two different points.}

Define the \emph{Jung constant $J(Z)$ of a metric space $Z$} as follows:
$$
J(Z)=\sup_{X\in\cB_{>0}(Z)}\frac{R(X)}{\diam X}.
$$
Thus, for any $X\in\cB_{>0}(Z)$, we have $R(X)\le J(Z)\,\diam X$.

Notice that, for any metric space $Z$, we have $1/2\le J(Z)\le1$.
The first inequality follows from the triangle inequality, and the second one from the fact that the ball of radius $\diam X$ centered at a point of $X$ contains all of $X$.

For a normed space $V$, due to its homogeneity, we give another equivalent definition of the Jung constant.
Denote by $\cP_1(V)$ the set of all subsets $X$ of $V$ such that $\diam X=1$.
Then
$$
J(V)=\sup_{X\in\cP_1(V)}R(X).
$$

Let us present a few well-known examples of the Jung constants:
\begin{itemize}
\item for any normed space $V$ with the $\sup$-norm, we have $J(V)=1/2$ (see~\cite{jung1910kleinsten});
\item for any one-distance space $Z$ with at least $2$ points, i.e., when all the distances between different points of $Z$ equal $d$, we have $J(Z)=1$ (evident);
\item for an $n$-dimensional inner-product space $V$, we have $J(V)=\sqrt{\frac{n}{2(n+1)}}$ (see~\cite{jung1910kleinsten});
\item for any $n$-dimensional normed space $V$, we have $J(V)\le n/(n+1)$ (see~\cite{Bohnenblust,Amir1985});
\item for $V=\R^n$ with the $\ell_1$-norm, we have $J(V)=n/(n+1)$ if and only if there exists an $(n+1)\x(n+1)$ Hadamard matrix, which is the case for $n=1,2$ and some multiples of $4$ (see~\cite{Dol'nikov}).
\end{itemize}

Now we define the \emph{Gulevich number $G(V)$} of a normed space $V$ as follows; see~\cite{Gulevich}.
For each~$X\in\cP_1(V)$, we consider its convex hull $\conv X$, and calculate the \emph{measure of nonconvexity\/} of the set $X$ as
$$
\l(X)\coloneqq\sup_{c\in\conv X}|c\,X|=\sup_{c\in\conv X}\inf_{x\in X}|cx|.
$$
After that we set
$$
G(V)=\sup_{X\in\cP_1(V)}\l(X).
$$
Thus, for any bounded subset $X$ of a normed space $V$ and any point $c\in \conv X$, we have $|c\,X|\le G(V)\,\diam X$.
In particular, if $X$ is finite or, more generally, compact, then for each $c\in\conv X$ there exists some $x\in X$ with $|cx|\le G(V)\,\diam X$.

Now we formulate a theorem proved by Gulevich in~\cite{Gulevich}.
For this, we need to modify the definition of the Jung constant as follows.
First we will generalize the definition of the Chebyshev radius in terms suggested in~\cite{Amir1985}.
Namely, let $X$ be a non-empty bounded subset of a metric space $Z$, and let $Y$ be an arbitrary non-empty subset of $Z$.
By the \emph{relative Chebyshev radius $R_Y(X)$} we mean $\inf_{y\in Y}\sup_{x\in X}|xy|$.
Note that $R_Z(X)$ is the standard Chebyshev radius.
In other words, the relative Chebyshev radius is obtained from the standard one by restricting the possible positions of the centers of balls circumscribed around the set $X$.

In~\cite{Gulevich}, for a normed space $V$, the \emph{relative Jung constant\/} was defined as
$$
J_s(V)=\sup_{X\in\cP_1(V)}\bigl\{R_C(X),\  \text{where $C=\conv X$}\bigr\}.
$$

\begin{remark}
Notice that in~\cite{Gulevich} the author called $J_s(V)$ simply the Jung constant and denoted it as $J(V)$.
However, in subsequent papers, the authors renamed this value as we did here.
\end{remark}

After that, Gulevich proved the following result.

\begin{theorem}[\cite{Gulevich}]\label{thm:Gulevich}
For any Banach space $V$, we have $G(V)=J_s(V)$.
\end{theorem}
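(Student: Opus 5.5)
We need to prove $G(V)=J_s(V)$ for a Banach space $V$, where both sides are suprema over sets $X$ with $\diam X=1$. We prove the two inequalities separately. The inequality $G(V)\le J_s(V)$ is a direct comparison of the two quantities. The reverse inequality $J_s(V)\le G(V)$ requires building, from a set $X$, a new set of the same diameter whose measure of nonconvexity dominates $R_{\conv X}(X)$.

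For $G(V)\le J_s(V)$, fix $X\in\cP_1(V)$ and $c\in\conv X$; we bound $|c\,X|$. Write $c=\sum_{i=1}^m t_i x_i$ as a convex combination of points $x_i\in X$, and set $X_0=\{x_1,\dots,x_m\}$. Then $\diam X_0\le 1$ and $|c\,X|\le |c\,X_0|$, so it suffices to treat finite sets. For finite sets, compare $X_0$ with $\conv X_0$ and with the centers allowed in $R_{\conv X_0}$.

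The direct comparison goes the wrong way. The relative radius only guarantees some point $y\in\conv X_0$ with $X_0\subseteq\bB(y;J_s)$, whereas we need every $c$ to be close to some point of $X_0$. We therefore use a duality/minimax argument. Consider the function $f(c)=\min_i|c\,x_i|$; this is a minimum of convex functions and so is not convex. Instead, use $\min_i\|c-x_i\|\le\sum_i t_i\|c-x_i\|$. Then we want a weighted inequality of the form
$$\sum_i t_i\|c-x_i\|\le J_s\cdot\diam X_0 \quad\text{whenever } c=\sum_i t_i x_i .$$
This resembles Jung-type bounds in Euclidean space but is false in general norms, so this route is unreliable.

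The fallback for $G\le J_s$ is Gulevich's actual idea, which handles both directions through a reduction. Given $X$ and $c\in\conv X$ with $|c\,X|=\rho$, consider the set $Y=X\cup\{\text{points near }c\}$, or more cleverly its reflection $2c-X$. This may be the key. For $x,x'\in X$ we have $\|(2c-x)-x'\|\le$ ... and we want $\diam(X\cup(2c-X))$ controlled.

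For $J_s\le G$ the plan is as follows. Take $X$ with $R_{\conv X}(X)=r$ and form $Y=\bigcup_{x\in X}\bB(x;s)$ for suitable $s$, or rather the set $\{y\in\conv X: |y\,X|\ge\ldots\}$. The cleanest choice is $Y=X\cup\{c\}$ for a point $c\in\conv X$ far from $X$.

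Overall plan: reduce everything to finite sets by compactness in finite dimensions, plus an approximation argument for Banach spaces, using that both quantities are suprema over finite subsets. Then prove $J_s\le G$ by the following step. For finite $X$ with $\diam X=1$, the sets $\bB(x;r')\cap\conv X$ with $r'<r$ have empty common intersection. A Helly/KKM-type argument on the simplex, with a map from the simplex to $\conv X$, then produces a point $c\in\conv X$ whose distance to every $x$ is at least $r'$, so $G\ge r'$. The idea is to apply the KKM lemma to the closed sets $F_i=\{t\in\Delta:\|\sum_j t_jx_j-x_i\|\ge r'\}$ indexed by the vertices.

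The reverse inequality $G\le J_s$ would use KKM dually. \textbf{The main obstacle is verifying the KKM covering hypothesis on the faces.} For that step, we would induct on faces, using the fact that a face corresponds to a subset of $X$ of diameter at most $1$.
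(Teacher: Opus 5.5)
Since the paper only quotes this result from Gulevich, your argument has to stand on its own. It has genuine gaps in both directions.

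The inequality $G\le J_s$ is not proved. You discard the direct comparison and the weighted bound, leave the reflection $2c-X$ unfinished, and end with ``would use KKM dually''. Reducing to a finite $X=\{x_1,\dots,x_m\}$ with $\diam X\le1$ is fine. What is missing is a way to combine relative centers of \emph{all} subsets of $X$. One way: let $\Delta$ be the simplex with vertices $e_1,\dots,e_m$, let $\sigma_S=\conv\{e_i\}_{i\in S}$, and let $\phi(t)=\sum_jt_jx_j$. For each nonempty $S$ pick $y_S\in\conv\{x_i\}_{i\in S}$ with $\|y_S-x_i\|\le J_s$ for all $i\in S$; such a point exists by the definition of $J_s$ and compactness of this hull. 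Send the barycenter of $\sigma_S$ to a point of $\sigma_S\cap\phi^{-1}(y_S)$ and extend affinely over the barycentric subdivision. The resulting self-map of $\Delta$ maps every face into itself, hence is onto (by the usual degree argument, face by face). So every $c\in\conv X$ equals $\sum_j\mu_jy_{S_j}$ for some chain $S_0\subsetneq\dots\subsetneq S_k$ and convex weights $\mu_j$. For $i\in S_0$ this gives $\|c-x_i\|\le\sum_j\mu_j\|y_{S_j}-x_i\|\le J_s$.

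For $J_s\le G$, your KKM step uses the wrong sets and aims at a false conclusion. For $F_i=\{t\in\Delta:\|\phi(t)-x_i\|\ge r'\}$ the KKM hypothesis already fails at the vertices: $e_i$ would have to lie in $F_i$, but $\|x_i-x_i\|=0<r'$. No induction on faces can fix this, because a point of $\conv X$ at distance at least $r'$ from every point of the \emph{same} $X$ need not exist. In the Euclidean plane, let $X$ be the vertices of an equilateral triangle of side $1$ together with its center. Then $R_{\conv X}(X)=1/\sqrt3$, yet every point of $\conv X$ lies within $1/3$ of $X$. The far point must be sought in $\conv\{x_i\}_{i\in S}$ for a suitable subset $S$. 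KKM yields this when applied to the closed ``near'' sets $A_i=\{t\in\Delta:\|\phi(t)-x_i\|\le G\}$. The KKM hypothesis $\sigma_S\subseteq\bigcup_{i\in S}A_i$ says that every point of $\conv\{x_i\}_{i\in S}$ is within $G$ of some $x_i$ with $i\in S$, which holds because this finite set has diameter at most $1$. KKM then gives $t^*\in\bigcap_iA_i$, i.e.\ $\phi(t^*)\in\conv X$ and $X\subseteq\bB(\phi(t^*);G)$, so $R_{\conv X}(X)\le G$.

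Finally, your claim that $J_s$ is a supremum over finite sets is itself something to prove. In finite dimensions it follows from compactness of $\overline{\conv X}$ and the $1$-Lipschitz property of $y\mapsto\sup_{x\in X}\|x-y\|$. For a general Banach space, the ``approximation argument'' you invoke is not supplied.
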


\begin{remark}
Generally speaking, $J_s(V)\ne J(V)$.
Indeed, let $V$ be $\R^3$ with the $\max$-norm.
Denote by $e_1$, $e_2$, $e_3$ the standard basis of $\R^3$, and let $C=\conv\{e_1,e_2,e_3\}$.
Take an arbitrary point $P=(x_1,x_2,x_3)\in C$, then $x_1+x_2+x_3=1$, and thus there exists $i$ such that $x_i\le1/3$.
Hence, we have $|Pe_i|\ge2/3$.
But $\diam\{e_1,e_2,e_3\}=1$, therefore $J_s(V)\ge2/3>1/2=J(V)$.
\end{remark}

\begin{remark}
The fact that Gulevich used the notation $J(V)$ for the relative Jung constant has led some authors to make a mistake, see for example~\cite[Theorem 12.1]{AlimovTsarkov2019}.
\end{remark}

Note that for any bounded subset $X$ of a normed space, and $C=\conv X$, each ball $B$ contains $X$ if and only if it contains $C$, because each ball in a normed space is convex.
This immediately implies that $R_Y(X)=R_Y(C)$ for any non-empty $Y$, and thus we have
$$
J_s(V)=\sup\bigl\{R_C(C):\text{$C\in\cP_1(V)$ and $C$ is convex}\bigr\}.
$$

\begin{theorem}[V.L.~Klee~\cite{Klee1960}, A.L.~Garkavi~\cite{Garkavi1964}]
\label{thm:KleeGarkavi}
In a normed space $V$, the equality $J(V)=J_s(V)$ holds if and only if $V$ is an inner-product space, or the dimension of $V$ is at most $2$.
\end{theorem}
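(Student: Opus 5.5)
The statement has two directions, and I would treat them separately. Throughout, $R_C(X)\ge R(X)$ for every $X$, so $J_s(V)\ge J(V)$ always. Thus each direction amounts to deciding when, for every $C=\conv X$ with $\diam X=1$, some point of $C$ achieves radius at most $J(V)$ around $X$.

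\emph{Sufficiency, inner-product case.} Let $V$ be an inner-product space, and let $\pi$ be the nearest-point map onto the closure $\overline C$ of $C$, where $C=\conv X$. This map is $1$-Lipschitz and fixes $\overline C$. Hence for any center $z$ we get $|\pi(z)x|\le|zx|$ for all $x\in X$, so $R_{\overline C}(X)=R(X)$. Since $|yx|$ is continuous in $y$, replacing $\overline C$ by $C$ does not change the infimum, and therefore $J_s(V)=J(V)$.

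\emph{Sufficiency, $\dim V\le2$.} Dimension $1$ is trivial, since $J=J_s=1/2$ via the midpoint. For a normed plane I would use Gulevich's description (Theorem~\ref{thm:Gulevich}) or argue directly via Helly's theorem. Fix a convex $C$ with $\diam C=1$ and put $r=J(V)$. The sets $C\cap\bB(x;r)$, $x\in C$, are compact convex subsets of the plane (after passing to the closure of $C$). By Helly's theorem they have a common point as soon as every three of them intersect. The task therefore reduces to a triangle: for $x_1,x_2,x_3\in C$, find a point of $\conv\{x_1,x_2,x_3\}\ss C$ within $r$ of each $x_i$.

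For the triangle, take a Chebyshev center $z$ of $\{x_1,x_2,x_3\}$, so that $|zx_i|\le r$. Suppose $z$ lies outside the triangle, say beyond the edge $[x_1,x_2]$. I would then try to move $z$ onto that edge while controlling the distances to $x_1$ and $x_2$, noting that the distance to $x_3$ can only help. This uses the planar structure: the unit circle is a closed convex curve, and monotonicity of the norm along lines can be compared in a two-dimensional picture. I expect this planar triangle lemma to be the main technical obstacle of the sufficiency part. If it does not go through directly, the fallback is to locate a point on the edge $[x_1,x_2]$ itself. Such a point is within $1/2\le r$ of $x_1$ and $x_2$, for instance the midpoint, and one would then argue by convexity of $y\mapsto|yx_3|$ that some point of the triangle works.

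\emph{Necessity.} Assume $\dim V\ge3$ and $J_s(V)=J(V)$; the goal is to show the norm is Euclidean. I would go through Kakutani's characterization: for $\dim\ge3$, a norm is Euclidean iff every two-dimensional subspace is the range of a norm-one projection, equivalently iff Chebyshev centers of bounded sets can always be chosen in the closed convex hull (Garkavi). The difficulty is that the hypothesis only equates suprema. It says nothing about an individual set, so one cannot directly conclude that each set has a center in its hull.

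My plan here is as follows. Start from a set $X_0$, $\diam X_0=1$, whose center cannot be placed in its hull, which is what a failure of Garkavi's property provides. Then enlarge it within the same diameter by adding points, for example from a Jung-extremal configuration such as a suitable simplex, so that $R(X)$ is pushed up to $J(V)$ while the relative radius $R_{\conv X}(X)$ is kept strictly above it. This would give $J_s(V)>J(V)$, a contradiction. Making this gluing quantitative is the hardest step of the whole proof. I would first attempt it in $\R^3$ by a local perturbation of a near-extremal set, modelled on the $\max$-norm example of the preceding remark.
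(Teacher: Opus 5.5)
The paper does not prove this theorem; it only quotes it from Klee and Garkavi. As stated, its ``only if'' half is false, so no argument can establish it, and your necessity plan must fail. Take $V=\ell_1^3$ and the points $(1,1,1)$, $(1,-1,-1)$, $(-1,1,-1)$, $(-1,-1,1)$. They are pairwise at distance $4$. For every $z$ the sum of the four distances from $z$ is at least $12$, because in each coordinate the four points take the values $1$ and $-1$ twice each. So their Chebyshev radius is $3$, and $J(\ell_1^3)\ge 3/4$. Conversely, let $V$ be $n$-dimensional and $\diam X=1$. The centroid of any $n+1$ points of $X$ lies in $\conv X$ and within $n/(n+1)$ of each of them. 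By Helly's theorem the compact convex sets $\bar B\bigl(x;n/(n+1)\bigr)\cap\overline{\conv X}$, $x\in X$, then have a common point, so $J_s(V)\le n/(n+1)$; this is the bound the paper quotes from Amir. Hence $J(\ell_1^3)=J_s(\ell_1^3)=3/4$, although $\ell_1^3$ is three-dimensional and not an inner-product space. The same holds for every space of dimension $n\ge3$ with $J(V)=n/(n+1)$. In particular, the step you single out as hardest cannot be carried out in $\ell_1^3$: there $R_{\conv X}(X)\le 3/4=J(V)$ for every $X$ with $\diam X=1$, so you cannot push $R(X)$ up to $J(V)$ while keeping $R_{\conv X}(X)$ strictly above it. What Klee and Garkavi actually established is a pointwise characterization. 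For $\dim V\ge3$, $V$ is an inner-product space if and only if every bounded set has a Chebyshev center in its closed convex hull, i.e.\ $R_{\conv X}(X)=R(X)$ for every bounded $X$. This property implies $J_s(V)=J(V)$ but is not implied by it. Your remark that the hypothesis ``only equates suprema'' is exactly where the stated equivalence breaks, and it cannot be bridged. The paper itself only ever uses the ``if'' direction.

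For that direction, your inner-product argument is fine; in an incomplete space, project in the completion and approximate. The planar case, however, is not proved. The triangle lemma is its whole content, and neither mechanism you offer works as stated. Moving $z$ towards the line through $x_1,x_2$ in an arbitrary direction can increase $|zx_3|$. The midpoint $m$ of $[x_1,x_2]$ is only guaranteed to satisfy $|mx_3|\le1$, not $|mx_3|\le J(V)$. The missing idea is that in a normed plane every closed half-plane $H$ is a $1$-Lipschitz retract. Let $\ell=\partial H$, $m_0\in\ell$, $u$ a unit vector along $\ell$, and $\varphi$ a norm-one functional with $\varphi(u)=1$. Then $P(v)=m_0+\varphi(v-m_0)u$ is a $1$-Lipschitz projection onto $\ell$. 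The map equal to the identity on $H$ and to $P$ off $H$ is $1$-Lipschitz: for $v\in H$ and $w\notin H$, split $[v,w]$ at its point on $\ell$. Now suppose the compact convex set $Z$ of Chebyshev centers of a bounded $X$ missed $\overline{\conv X}$. Strictly separate them by such an $H\supseteq\overline{\conv X}$ with $H\cap Z=\emptyset$. Retracting a center into $H$ does not increase its distance to any point of $X$, so it yields a center in $H$, a contradiction. This gives $R_{\conv X}(X)=R(X)$ for every bounded $X$ in the plane, with no Helly reduction. In dimension $\ge3$ the same argument would need norm-one projections onto hyperplanes, which is precisely where the Euclidean structure enters.
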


Let us show some results on concrete values and estimates of the relative Jung constant~$J_s(V)$.
\begin{itemize}
\item For $n$-dimensional inner-product spaces $V$, we have $J_s(V)=J(V)=\sqrt{\frac{n}{2(n+1)}}$ (see Theorem~\ref{thm:KleeGarkavi}).
\item For infinite-dimensional Hilbert space $V$, we have $J_s(V)=J(V)=1/\sqrt2$ (see \cite{Routledge} and Theorem~\ref{thm:KleeGarkavi}).
\item For any $n$-dimensional normed space $V$, we have $J_s(V)\le n/(n+1)$ (see \cite[Proposition~2.12]{Amir1985}).
\item For $n$-dimensional space $V$ with the $\max$-norm and $n\ge2$, we have $J_s(V)=(n-1)/n$ (S.V.~Berdyshev~\cite{Berdyshev1998}).
For $1\le n\le2$, we have $J_s(V)=J(V)=1/2$ (see~\cite{jung1910kleinsten} and Theorem~\ref{thm:KleeGarkavi}).
\end{itemize}

Now we continue to our main results.

%%%%%%%%%%%%%%%%%%%%%%%%%%%%%%
\section{Weak estimate of the Gromov--Hausdorff distance}
\markright{\thesection.~Weak estimate of the Gromov--Hausdorff distance}
%%%%%%%%%%%%%%%%%%%%%%%%%%%%%%

\begin{theorem}
\label{thm:weak}
If $\bigl(V,\|\cdot\|\bigr)$ is a finite-dimensional normed space and $X\ss V$ is non-empty, then
$$
d_\gh(V,X)\ge\frac{1}{2\,J_s(V)}d_\h(V,X).
$$
\end{theorem}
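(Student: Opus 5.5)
We bound the Hausdorff distance by the distortion of an arbitrary correspondence. Fix $R\in\cR(V,X)$ with $r=\dis R<\infty$. If no such $R$ exists, then $d_\gh(V,X)=\infty$ and there is nothing to prove. We aim to show $|pX|\le J_s(V)\,r$ for every $p\in V$. This gives $d_\h(V,X)\le J_s(V)\,r$, since $X\ss V$ makes the other half of the Hausdorff distance zero. Taking the infimum over $R$ and applying Theorem~\ref{thm:GH-metri-and-relations} then gives the claim. To get the bound on $|pX|$, I would use Theorem~\ref{thm:Gulevich} in the form stated after the definition of $G(V)$: if $K$ is a compact set with $\diam K\le r$ and $p\in\conv K$, then $|pX|\le|pK|\le G(V)\,r=J_s(V)\,r$, provided $K\ss X$. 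So everything reduces to producing, for each $p$, a set $K\ss X$ (or an approximating family of such sets, with an extra $\e$) of diameter at most $r+\e$ whose convex hull contains $p$.

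To find $K$, I would first normalize $R$ using the linear structure. Choose a map $f\:V\to X$ with $(v,f(v))\in R$, so $\bigl||f(a)f(b)|-|ab|\bigr|\le r$. Define $U(v)=\lim_k f(t_kv)/t_k$ along a sequence $t_k\to\infty$. A diagonal argument on a countable dense set, using finite-dimensionality and compactness, provides a common sequence, and the limit extends by uniform continuity. Then $U$ is an isometry of $V$ into itself with $U(0)=0$, and it is surjective because $V$ is finite-dimensional. By Mazur--Ulam, $U$ is linear. Composing with $U^{-1}$, which preserves the norm and hence $J_s(V)$, $d_\h$ and $\dis R$, I may assume that $f$ is asymptotically the identity: $f(tv)/t\to v$ along $t_k$.

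The key step, and the one I expect to be the main obstacle, is to locate $K$. My first attempt is as follows. Fix $p$, pick a finite set of directions $w_1,\dots,w_m$ whose convex hull contains $0$ in its interior, and consider the points $v_i=p+t w_i$ for large $t$. The images $x_i=f(v_i)$ satisfy $|x_ix_j|\le t|w_iw_j|+r$ and are asymptotically at $p+t w_i+o(t)$. I would then intersect the balls $\bB(x_i;\,t+\text{const})$. Using the distortion bound for pairs $(v,f(v))$ and $(v_i,x_i)$, every $x=f(v)$ with $v$ near $p$ lies in all these balls up to an additive $r$. So the points of $X$ corresponding to a neighbourhood of $p$ form a set whose diameter is controlled by $r$ plus a term that becomes negligible relative to $t$. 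What is needed is that this set \emph{surrounds} $p$, that is, $p$ lies in its convex hull. Here I would argue by contradiction with a separating functional. If $p\notin\conv K$, a functional $\varphi$ with $\|\varphi\|=1$ separates $p$ from $K$. Then one should use a far point $v=p-t\,u$, with $u$ a vector normed by $\varphi$, and compare $|f(v)f(p')|$ with $|v\,p'|$ to violate the distortion bound.

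Making this separation argument rigorous, with the errors $o(t)$ from the asymptotic normalization absorbed into an arbitrary $\e$, is where I expect the real work to be. If it fails, my fallback is to replace the sequence limit by an ultralimit, or to average $f$ along the scaling, so that the error terms become exact.
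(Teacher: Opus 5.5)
Your reduction is the same as the paper's. It suffices to find, for each $p\in V$ and $\varepsilon>0$, a finite set $K\subseteq X$ with $\operatorname{diam}K<r+\varepsilon$ and $p\in\operatorname{conv}K$; Theorem~\ref{thm:Gulevich} then finishes. But that step carries the whole theorem, and the route you sketch for it cannot work. Your candidate, the image under $f$ of a small neighbourhood of $p$, need not surround $p$, because distortion does not detect translations. Take $X=V$ and $f(v)=v+c$. Then $\operatorname{dis}f=0$ and $f(tv)/t\to v$, so $f$ is already ``normalized''. Yet $f\bigl(\bar B(p;\varepsilon/2)\bigr)=\bar B(p+c;\varepsilon/2)$ does not contain $p$ in its convex hull once $\|c\|>\varepsilon/2$. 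So no argument of the form ``separating functional plus a far point violates the distortion bound'' can give a contradiction. More generally, distortion says nothing about $f$ below scale $r$, so local information at $p$ is useless. The right small set lies over some unknown point $q$, and locating $q$ is a topological problem, not a metric one. Separately, your normalization controls $f(tv)$ only up to $o(t)$, and such errors cannot be absorbed into an additive $\varepsilon$ as $t\to\infty$.

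The missing idea, which is how the paper proceeds, is continuity plus surjectivity. Triangulate $V$ with mesh $<\varepsilon$ and let $f^*$ be the piecewise-linear extension of $f$ restricted to the vertices. Then $\operatorname{diam}f^*(\Delta)<r+\varepsilon$ for every simplex $\Delta$, and $\operatorname{dis}f^*<\infty$. So $f^*$ is surjective by Lemma~\ref{lemma:onto}, and $p\in f^*(\Delta)=\operatorname{conv}\{f(y_0),\dots,f(y_n)\}$ for some simplex $\Delta$. The set $K=\{f(y_0),\dots,f(y_n)\}\subseteq X$ is then the one you need.

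Your rescaling limit $U$ is not wasted: it gives a self-contained proof of that surjectivity. After composing with $U^{-1}$, and since $\sup_v\|f^*(v)-f(v)\|<\infty$, uniform convergence on the unit sphere gives $\|f^*(t_kv)-t_kv\|=o(t_k)$ uniformly for $\|v\|=1$. Hence, for large $k$, the straight-line homotopy between $f^*$ and the identity on the sphere $\{\|v\|=t_k\}$ avoids $p$. So $f^*$ restricted to that sphere has degree $1$ about $p$, which forces $p\in f^*\bigl(\bar B(0;t_k)\bigr)$. What you need is this degree argument applied to a continuous extension of $f$, not a separation argument.
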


\begin{proof}
If $d_\gh(V,X)=\infty$ then the inequality holds.
Suppose now that $d_\gh(V,X)<\infty$.
Choose an arbitrary finite $C>2\cdot d_\gh(V,X)$, then there exists a correspondence $R\in\cR(V,X)$ with $\dis R<C$.
Let $f\:V\to X$ be a mapping such that $f\ss R$, where $f$ denotes the graph of $f$ as well.
Then $\dis f\le\dis R<C$.

Choose an arbitrary positive $\e<C$.
Denote by $n$ the dimension of $V$, and choose an arbitrary basis $e_1,\ldots,e_n$ such that $\|e_i\|<\e/n$.
Then for any $0\le\l_i\le 1$ we have $\bigl\|\sum_{i=1}^n\l_i\,e_i\bigr\|<\e$.
For $(m_1,\ldots,m_n)\in\Z^n$ we let
$$
K_{m_1,\ldots,m_n}=\bigl\{(m_1+\l_1)\,e_1+\cdots+(m_n+\l_n)\,e_n:0\le\l_1,\ldots,\l_n\le1\bigr\},
$$
giving $\diam K_{m_1,\ldots,m_n}<\e$ and $V=\cup K_{m_1,\ldots,m_n}$.
We triangulate each of the parallelepipeds $K_{m_1,\ldots,m_n}$, obtaining a triangulation $\S$ of the entire $V$.
Note that for each $\D\in\S$, we have $\diam\D<\e$.

Let $Y$ be the set of vertices of the triangulation $\S$.
Let $f^*\:V\to V$ be defined as follows: $f^*|_Y=f$, and for each $v\in V\sm Y$ we choose an arbitrary $\D\in\S$ such that $v\in\D$, and if $\{y_0,\ldots,y_n\}\ss Y$ are the vertices of $\D$, we write $v=\sum_{i=0}^n\l_i\,y_i$ and put $f^*(v)=\sum_{i=0}^n\l_i\,f(y_i)$.
Since $\S$ is a triangulation, the mapping $f^*$ is well-defined and continuous.
We will need the following three lemmas.

\begin{lemma}\label{lemma:Diameter}
For each $\D\in\S$ we have $\diam f^*(\D)<C+\e$.
\end{lemma}

\begin{proof}
Let $\{y_0,\ldots,y_n\}\ss Y$ be the set of vertices of $\D$.
Take arbitrary points $w,\,w'\in f^*(\D)$, then $w=\sum_{i=0}^n\l_if(y_i)$ and $w'=\sum_{i=0}^n\mu_if(y_i)$ for non-negative $\l_i$ and $\mu_i$ satisfying $\sum_{i=0}^n\l_i=\sum_{i=0}^n\mu_i=1$.
Thus
\begin{multline*}
|ww'|=\biggl\|\sum_{i=0}^n\l_if(y_i)-\sum_{j=0}^n\mu_jf(y_j)\biggr\|=
\biggl\|\Bigl(\sum_{j=0}^n\mu_j\Bigr)\sum_{i=0}^n\l_if(y_i)-\Bigl(\sum_{i=0}^n\l_i\Bigr)\sum_{j=0}^n\mu_jf(y_j)\biggr\|\\
=\biggl\|\sum_{i,j}\l_i\mu_j\bigl(f(y_i)-f(y_j)\bigr)\biggr\|\le\sum_{i,j}\l_i\mu_j\bigl|f(y_i)f(y_j)\bigr|<\sum_{i,j}\l_i\mu_j(C+\e)=C+\e,
\end{multline*}
because $\dis f<C$ and $|y_iy_j|<\e$.
\end{proof}

\begin{lemma}\label{lemma:Extimate}
We have $\dis f^*\le 3C+4\e$.
\end{lemma}

\begin{proof}
Note that if $v\in\D\in\S$ and $\{y_0,\ldots,y_n\}\ss Y$ are the vertices of $\D$, then $v=\sum_{i=0}^n\l_i\,y_i$, where $0\le\l_i\le1$ for all $i$, and $\sum_{i=0}^n\l_i=1$.
Also, $|v\,y_j|<\e$ for all $j$, and we get
\begin{multline*}
\bigl|f^*(v)f^*(y_j)\bigr|=\biggl\|\sum_{i=0}^n\l_i\,f(y_i)-f(y_j)\biggr\|=
\biggl\|\sum_{i=0}^n\l_i\,\bigl(f(y_i)-f(y_j)\bigr)\biggr\|\\
\le\sum_{i=0}^n\l_i\,\bigl|f(y_i)f(y_j)\bigr|<\sum_{i=0}^n\l_i\,(C+|v\,y_j|)<C+\e,
\end{multline*}
thus $\bigl|f^*(v)f^*(y_j)\bigr|-|v\,y_j|<C+\e$.
Since $|v\,y_j|<\e<C$, we get $|v\,y_j|-\bigl|f^*(v)f^*(y_j)\bigr|<C+\e$, hence
$$
\Bigl|\bigl|f^*(v)f^*(y_j)\bigr|-|v\,y_j|\Bigr|<C+\e.
$$

Let $v_1,\,v_2\in V$ be arbitrary, and let $\D_1$ and $\D_2$ be simplices of $\S$ that contain $v_1$ and $v_2$, respectively.
Let $y_1,\,y_2\in Y$ be some vertices of $\D_1$ and $\D_2$, respectively.
Then
\begin{multline*}
\bigl|f^*(v_1)f^*(v_2)\bigr|-|v_1v_2|\\
\le\bigl|f^*(v_1)f^*(y_1)\bigr|+\bigl|f^*(y_1)f^*(y_2)\bigr|+\bigl|f^*(y_2)f^*(v_2)\bigr|
-|y_1y_2|+|y_1v_1|+|y_2v_2|\\
<(C+\e)+\Bigl(\bigl|f(y_1)f(y_2)\bigr|-|y_1y_2|\Bigr)+(C+\e)+
\e+\e<3C+4\e,
\end{multline*}
because $\bigl|f^*(v_1)f^*(y_1)\bigr|<C+\e$ and $\bigl|f^*(y_2)f^*(v_2)\bigr|<C+\e$ by Lemma~\ref{lemma:Diameter}.
Similarly,
\begin{multline*}
|v_1v_2|-\bigl|f^*(v_1)f^*(v_2)\bigr|\\
\le|v_1y_1|+|y_1y_2|+|y_2v_2|-\bigl|f^*(y_1)f^*(y_2)\bigr|+\bigl|f^*(v_1)f^*(y_1)\bigr|+\bigl|f^*(y_2)f^*(v_2)\bigr|\\
<\e+\Bigl(|y_1y_2|-\bigl|f(y_1)f(y_2)\bigr|\Bigr)+\e+(C+\e)+(C+\e)<3C+4\e,
\end{multline*}
thus $\dis f^*\le3C+4\e$.
\end{proof}

\begin{lemma}[{Webster~\cite[Proposition 4.1]{B75}}]\label{lemma:onto}
Let $f\:V\to W$ be a continuous map between finite-dimensional normed spaces such that $\dim V\ge\dim W$.
Suppose that $\dis f<\infty$.
Then $\dim V=\dim W$ and $f$ is surjective.
\end{lemma}

We return to the proof of Theorem~\ref{thm:weak}.
By Lemma~\ref{lemma:Extimate}, the mapping $f^*\colon V\to V$ has finite distortion.
This, together with Lemma~\ref{lemma:onto}, implies that the mapping $f^*$ is surjective, thus $V=\cup_{\D\in\S}f^*(\D)$.
Take an arbitrary $v\in V$, then there exists $\D\in\S$ such that $v\in f^*(\D)$.
Let $\{y_0,\ldots,y_n\}$ be the vertices of $\D$.
By definition of $f^*|_\D$, we have $v\in\conv\bigl\{f(y_0),\ldots,f(y_n)\bigr\}=f^*(\D)$.
By Lemma~\ref{lemma:Diameter}, we have $\diam f^*(\D)<C+\e$.
Note that $f^*(Y)=f(Y)\ss X$, therefore $\bigl\{f(y_0),\ldots,f(y_n)\bigr\}\ss X$.
By Theorem~\ref{thm:Gulevich}, the point $v$ is at distance at most $J_s(V)(C+\e)$ from a point in $\bigl\{f(y_0),\ldots,f(y_n)\bigr\}\ss X$.
Since $v$ was an arbitrary point of $V$, we get $d_\h(V,X)\le J_s(V)(C+\e)$, and due to arbitrariness of $\e$, we conclude that $d_\h(V,X)\le J_s(V)\,C$.
It remains to recall that $C$ was an arbitrary finite value with $C>2\cdot d_\gh(V,X)$, and thus we get $d_\h(V,X)\le2\,J_s(V)\,d_\gh(V,X)$.
\end{proof}

\begin{corollary}
For every subset~$X$ of a finite-dimensional normed space~$V$, the following conditions are equivalent: 
\begin{itemize}
    \item $X$ is an $\varepsilon$-net in~$V$ for some~$\varepsilon > 0$;
    \item $d_{H}(X, V) < \infty$;
    \item $d_{GH}(X, V) < \infty$.
\end{itemize}
\end{corollary}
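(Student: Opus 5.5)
We will prove the cyclic chain of implications: first condition $\Rightarrow$ second $\Rightarrow$ third $\Rightarrow$ first.

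\emph{From the first condition to the second.} Suppose $X$ is an $\e$-net in $V$, so every point of $V$ lies within distance $\e$ of $X$. Since $X\ss V$, we have $\sup_{x\in X}|xV|=0$. Hence $d_\h(X,V)\le\e<\infty$ directly from Definition~\ref{dfn:H}.

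\emph{From the second condition to the third.} Take $Z=V$ with the identity embeddings of $X$ and $V$. Definition~\ref{dfn:GH} then gives $d_\gh(X,V)\le d_\h(X,V)<\infty$.

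\emph{From the third condition to the first.} This is the only step with content, and it is exactly what Theorem~\ref{thm:weak} provides. Suppose $d_\gh(X,V)<\infty$; then $X$ is non-empty, since the Gromov--Hausdorff distance is defined only for non-empty spaces. Theorem~\ref{thm:weak} yields
$$
d_\h(V,X)\le2\,J_s(V)\,d_\gh(V,X).
$$
By the bound $J_s(V)\le n/(n+1)<1$ for an $n$-dimensional space, recalled earlier from~\cite{Amir1985}, the right-hand side is finite. Set $\e=d_\h(V,X)+1>0$. Every point $v\in V$ then satisfies $|vX|\le d_\h(V,X)<\e$, so there is some $x\in X$ with $|vx|<\e$. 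Thus $X$ is an $\e$-net.

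There is one degenerate case. If $\dim V=0$, then $V$ is a single point and all three conditions hold trivially. The Jung constant needs at least two points, so Theorem~\ref{thm:weak} is only invoked when $\dim V\ge1$.

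We expect no real obstacle, since all the analytic work is already contained in Theorem~\ref{thm:weak}. The only points needing care are that $J_s(V)$ is finite and that the $\e$-net must have a strictly positive radius; both are handled above.
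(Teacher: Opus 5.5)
Your proof is correct and follows the paper's intended argument. The paper states this corollary without proof, as an immediate consequence of Theorem~\ref{thm:weak}; the other implications follow from the definitions and $d_\gh\le d_\h$. One small correction: in your $\dim V=0$ remark, the conditions hold only for non-empty $X$, but for $X=\0$ all three fail, so the equivalence is unaffected.
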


\begin{corollary}
If $V$ is an $n$-dimensional normed space with the $\max$-norm, and $X\ss V$ is non-empty, then
\begin{itemize}
\item $d_\gh(V,X)=d_\h(V,X)$ if $n\le 2$;
\item $d_\gh(V,X)\ge \frac{n}{2(n-1)} d_\h(V,X)$ for $n\ge3$.
\end{itemize}
\end{corollary}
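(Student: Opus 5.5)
The statement is a direct consequence of Theorem~\ref{thm:weak}, combined with the known values of $J_s(V)$ for the $\max$-norm and the elementary upper bound $d_\gh(V,X)\le d_\h(V,X)$. We split according to the dimension~$n$.

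The upper bound is immediate in all dimensions. Since $X$ and $V$ both sit inside the common metric space $V$, Definition~\ref{dfn:GH} gives $d_\gh(V,X)\le d_\h(V,X)$, taking $Z=V$ with the identity embeddings. If $d_\h(V,X)=\infty$, Theorem~\ref{thm:weak} forces $d_\gh(V,X)=\infty$ as well: the right-hand side is infinite because $J_s(V)\le n/(n+1)$ is finite. So both bullet items hold trivially in that case, and we may assume the Hausdorff distance is finite.

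For $n\le 2$ we use the value $J_s(V)=J(V)=1/2$, quoted in the list before Section~3. It follows from Jung's computation $J(V)=1/2$ for the $\sup$-norm together with Theorem~\ref{thm:KleeGarkavi}, since the dimension is at most~$2$. Substituting into Theorem~\ref{thm:weak} gives $d_\gh(V,X)\ge \frac{1}{2\cdot(1/2)}\,d_\h(V,X)=d_\h(V,X)$. Combined with the upper bound, this yields equality.

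For $n\ge 3$ (indeed for all $n\ge2$) we invoke Berdyshev's result $J_s(V)=(n-1)/n$. Theorem~\ref{thm:weak} then reads $d_\gh(V,X)\ge \frac{1}{2(n-1)/n}\,d_\h(V,X)=\frac{n}{2(n-1)}\,d_\h(V,X)$, which is the claimed bound.

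There is no real obstacle beyond checking the cited constants. The only subtle point is the case $n=1$, where Berdyshev's formula would give $0$ and is therefore not applicable. That case must be handled by Jung's value $1/2$, which is why the first bullet is stated separately.
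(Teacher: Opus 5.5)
Your proposal is correct and is essentially the paper's argument: the paper gives no separate proof of this corollary, which follows at once from Theorem~\ref{thm:weak} together with the listed values $J_s(V)=1/2$ for $n\le 2$ and Berdyshev's $J_s(V)=(n-1)/n$ for $n\ge 3$, plus the trivial bound $d_\gh(V,X)\le d_\h(V,X)$. Your additional remarks are accurate: the case $d_\h(V,X)=\infty$ is handled correctly, and you are right that $n=1$ needs Jung's value rather than Berdyshev's formula.
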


The estimate from Theorem~\ref{thm:weak} can be strengthened under certain additional assumptions.
A key role here is played by the condition we have termed the \emph{intersection property} (see the next section).

%%%%%%%%%%%%%%%%%%%%%%%%%%%%%%
\section{Intersection property}
\markright{\thesection.~Intersection property}
%%%%%%%%%%%%%%%%%%%%%%%%%%%%%%

Recall that by $B(x;r)$ and $\bB(x;r)$ we denote the open and the closed balls, respectively, with centers at $x$ and radii $r$.
The key point of the proof of the stronger estimate below uses the following property of normed spaces.

\emph{Let $V$ be a finite-dimensional normed space.
Given $\r>0$, there exists $t>0$ such that for each $t'>t$ the intersection of $V\sm\bB(0;t')$ with the intersection of any finite collection $\bigl\{B(x_i;r_i):r_i\le\r\bigr\}$ is either empty or contractible\/ \(in particular, connected\/\).} We call this restriction the \textbf{\emph{intersection property}}.

\begin{lemma}\label{lem:InterEuclid}
Each $n$-dimensional Euclidean space $V$ with $n<\infty$ satisfies the intersection property.
\end{lemma}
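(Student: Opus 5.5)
We will prove the lemma with $t=2\r$, using the inversion $\phi(z)=t'^2 z/\|z\|^2$ of $V\sm\{0\}$, where $\|\cdot\|$ is the Euclidean norm. The point is that $\phi$ turns the non-convex set in question into a convex one, and a non-empty convex set is contractible.

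Fix $t'>2\r$ and a finite family of open balls $B(x_i;r_i)$ with $r_i\le\r$, and put
$$
W=\Bigl(\bigcap_i B(x_i;r_i)\Bigr)\sm\bB(0;t').
$$
First we dispose of balls whose closure contains the origin. If $\|x_i\|\le r_i$ for some $i$, then every $z\in B(x_i;r_i)$ satisfies $\|z\|<\|x_i\|+r_i\le2\r<t'$. Hence $B(x_i;r_i)\ss\bB(0;t')$ and $W=\0$. So we may assume $\|x_i\|>r_i$ for every $i$, i.e.\ $0\notin\bB(x_i;r_i)$. In that case $W\ss V\sm\{0\}$.

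Next we apply $\phi$. The map $\phi$ is a homeomorphism (indeed an involution) of $V\sm\{0\}$ onto itself, so $W$ is contractible iff $\phi(W)$ is. Two facts about $\phi$ are needed.
\begin{enumerate}[(i)]
\item $\phi$ maps $V\sm\bB(0;t')$ onto $B(0;t')\sm\{0\}$. This is immediate, since $\|\phi(z)\|=t'^2/\|z\|$.
\item $\phi$ maps an open Euclidean ball whose closure avoids the origin onto an open Euclidean ball whose closure avoids the origin. This is the classical property of inversions. One checks it by writing the condition $\|z-x\|^2<r^2$ as $\|z\|^2-2\langle z,x\rangle+\|x\|^2-r^2<0$, substituting $z=t'^2w/\|w\|^2$, and multiplying by $\|w\|^2/(\|x\|^2-r^2)>0$. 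The result is the inequality $\|w-x'\|^2<r'^2$ with $x'=t'^2x/(\|x\|^2-r^2)$ and $r'=t'^2r/(\|x\|^2-r^2)$.
\end{enumerate}

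By (i) and (ii),
$$
\phi(W)=\bigl(B(0;t')\sm\{0\}\bigr)\cap\bigcap_i B(x_i';r_i'),
$$
with $0\notin B(x_i';r_i')$. Because at least one ball is present and none of these balls contains $0$, removing the origin changes nothing, so $\phi(W)=B(0;t')\cap\bigcap_i B(x_i';r_i')$. This is a finite intersection of convex sets, hence convex, and therefore either empty or contractible by straight-line homotopy to any of its points. Consequently $W$ is empty or contractible.

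The only step needing care is (ii). Its computation uses the inner product, which is exactly why the argument is specific to Euclidean norms. We also must ensure no ball passes through the origin, since otherwise its image would be a half-space or the complement of a ball; the choice $t=2\r$ handles this.
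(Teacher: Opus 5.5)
Your proof is correct. It is closely related to the paper's argument but takes a different route.

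Both arguments apply a M\"obius-type map that sends round balls to round balls and the exterior of a large ball to a convex piece. The paper lifts everything to a sphere $S^n(R)\subset\R^{n+1}$ by stereographic projection. It chooses $R$ large so that the preimage of every ball of radius at most $\rho$ is a cap inside an open hemisphere. It then chooses $t$ large so that the preimage of $V\setminus\bar{B}(0;t')$, together with the pole, is also such a cap, and concludes by geodesic convexity on the sphere.

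You instead invert $V\setminus\{0\}$ in the sphere of radius $t'$ about the origin. This turns the exterior into a punctured ball and the origin-avoiding balls into balls, so you end up with ordinary linear convexity in $V$ itself.

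What your route buys:
\begin{itemize}
\item an explicit constant $t=2\rho$;
\item a self-contained check of the ball-to-ball property by direct computation;
\item no reliance on the facts about spherical caps that the paper uses without proof, namely that caps smaller than a hemisphere are geodesically convex and that geodesically convex sets in an open hemisphere are contractible.
\end{itemize}
The price is your preliminary step discarding balls whose closure contains $0$. That case never arises in the stereographic picture, since no cap contains the north pole. The paper's version, in turn, matches the stereographic compactification used later in the proof of Theorem~\ref{thm:gh-lower-jung-new}.

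You were also right to require at least one ball. For the empty collection, $V\setminus\bar{B}(0;t')$ is not contractible, so the intersection property has to be read for non-empty collections.
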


\begin{proof}
We represent the space $V$ as the coordinate hyperspace $x^{n+1}=0$ in the Euclidean space $\R^{n+1}$ with the Cartesian coordinates $x^1,\ldots,x^{n+1}$.
Fix $\rho>0$.
Let $S^n(R)\ss\R^{n+1}$ denote the sphere with center at the origin and radius $R$, and let $N=(0,\ldots,0,R)$.
Consider the stereographic projection $\nu\:S^n(R)\sm\{N\}\to V$.
Choose $R$ large enough so that each $B(x_i;r_i)$ has preimage a spherical ball contained in an open hemisphere.
Then choose $t$ large enough so that each $V\sm\bB(0;t')$ for $t'>t$ also has preimage a spherical ball contained in an open hemisphere.
Since such balls in $S^n(R)$ are geodesically convex, then their intersection, if not empty, is also geodesically convex and thus contractible.
\end{proof}

\begin{lemma}\label{lem:MaxNorm}
Each $n$-dimensional space $V$ with $n<\infty$, endowed with $\max$-norm, satisfies the intersection property.
\end{lemma}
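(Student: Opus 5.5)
We need to show that for the max-norm on $\R^n$, given $\r>0$ there is $t$ such that for all $t'>t$, the set $\bigl(V\sm\bB(0;t')\bigr)\cap\bigcap_i B(x_i;r_i)$ is empty or contractible. The plan is to use that open max-norm balls are open axis-parallel cubes, so any finite intersection $Q=\bigcap_i B(x_i;r_i)$ is either empty or an open box $\prod_{k=1}^n(a_k,b_k)$ with every side length $b_k-a_k\le 2\r$. The complement $V\sm\bB(0;t')=\{v:\max_k|v^k|>t'\}$ is the union over $k$ of the $2n$ open half-spaces $\{v^k>t'\}$ and $\{v^k<-t'\}$. The whole question therefore reduces to the topology of a box minus a large cube.

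The key step is choosing $t=\r$, so that for $t'>t$ the cube $\bB(0;t')$ has side $2t'>2\r$. Then no box $Q$ with sides at most $2\r$ can meet both $\{v^k>t'\}$ and $\{v^k<-t'\}$ for the same $k$, since that would require $b_k-a_k>2t'>2\r$. Hence $Q\cap(V\sm\bB(0;t'))=\bigcup_{k\in S}\bigl(Q\cap H_k\bigr)$, where $S$ is the set of coordinates $k$ for which $Q$ meets a half-space, and $H_k$ is the unique such half-space ($\{v^k>t'\}$ or $\{v^k<-t'\}$). Each piece $Q\cap H_k$ is again a nonempty open box, hence convex.

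To prove contractibility I would use the nerve lemma for this finite open cover by convex sets. Any subfamily $\{Q\cap H_k\}_{k\in T}$ with $T\ss S$ has intersection $Q\cap\bigcap_{k\in T}H_k$, which is a product of intervals in which coordinate $k\in T$ is restricted to $(\max(a_k,t'),b_k)$ or $(a_k,\min(b_k,-t'))$. This interval is nonempty because $k\in S$, and the restrictions involve distinct coordinates, so the intersection is a nonempty box. Thus all intersections are nonempty and convex, the nerve is a full simplex, and the union is contractible (or empty if $S=\0$).

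Alternatively, one could give an explicit straight-line homotopy: the point $p$ obtained by pushing each coordinate $k\in S$ into its allowed range lies in every piece, and for $v$ in the union the segment $[v,p]$ stays in the union. If $v\in Q\cap H_k$, then $p\in Q\cap H_k$ too, and that set is convex. This radial contraction to $p$ is also elementary. The main obstacle, and the only delicate point, is ruling out that $Q$ meets opposite half-spaces of the same coordinate. Doing so is exactly where the bound $r_i\le\r$ and the choice $t\ge\r$ enter, since otherwise the union would be disconnected.
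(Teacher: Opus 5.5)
Your proof is correct and is essentially the paper's argument. The paper also takes $t>\rho$ so that the box $P=\bigcap_i B(x_i;r_i)$ cannot reach opposite faces of $\bar{B}(0;t')$, and writes $W\cap P$ as the union of the convex pieces $P\cap\Pi_i$ over the relevant open half-spaces. It then exhibits a common point of these pieces (the coordinatewise maximum of chosen points $A_i\in P\cap\Pi_i$, after a reflection), so the union is star-shaped. Indexing by the half-spaces $P$ meets rather than by the faces of the cube avoids the paper's case split, and your nerve-lemma route is an equivalent alternative to the star-shaped contraction.
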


\begin{proof}
Here we use the notations from the intersection property, and we choose $t$ such that $\r<t$.
To start with, let us note that the intersection $P\coloneqq \cap_iB(x_i;r_i)$ is a rectangular parallelepiped with hyperfaces parallel to the coordinate hyperplanes.
Since $r_i\le\rho<t$ for all $i$, the parallelepiped $P$ cannot intersect opposite hyperfaces of $\bB(0;t')$ for $t'>t$.

Let us put $W=V\sm\bB(0;t')$, and consider the case when $W\cap P\ne\0$.
If $P$ does not intersect $\bB(0;t')$, then $W\cap P=P$ is convex.
If $P$ intersects only one hyperface $F$ of $\bB(0;t')$, then $W\cap P=\Pi\cap P$, where $\Pi$ is the open half-space bounded by the hyperspace containing $F$ such that $\Pi\cap\bB(0;t')=\0$.
Thus $W\cap P$ is convex in this case.

Now, let $P$ intersect several hyperfaces $F_i$, $i\in I\ss\{1,\ldots,n\}$, of $\bB(0;t')$.
Then for each $i$ we consider the corresponding open half-space $\Pi_i$ whose boundary $\pi_i$ contains $F_i$, and such that $\Pi_i\cap\bB(0;t')=\0$ for all $i$.
Without loss of generality, we assume that $\Pi_i=\{(x^1,\ldots,x^n):x^i>t'\}$, where the $x^i$ are the Cartesian coordinates of $V$; we can always obtain this situation by the corresponding symmetry of $V$.
Since $P$ is open and intersects $\pi_i$, it intersects both half-spaces bounded by $\pi_i$, and in particular, it intersects $\Pi_i$.
Choose arbitrary $A_i\in P\cap\Pi_i$, and let $A_i=(x_i^1,\ldots,x_i^n)$.
Then by evident property of rectangular parallelepipeds with hyperfaces parallel to the corresponding coordinate subspaces, the point $A=(\max_ix_i^1,\ldots,\max_ix_i^n)$ belongs to $P$.
By definition of $\Pi_i$, we have $A\in\Pi_i$, thus $A$ belongs to the intersection of $P_i\coloneqq P\cap\Pi_i$.
Since each $P_i$ is convex, and $W\cap P=\cup P_i$, all of them, together with $W\cap P$, can be contracted to $A$.
\end{proof}

Let us generalize the previous result.
To do that, we use the following
\begin{lemma}[\cite{Ilyukhin}]
\label{lem:ContrInter}
If $V$ is a $2$-dimensional normed space, then $V$ satisfies the intersection property.
\end{lemma}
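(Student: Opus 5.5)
The plan is to mimic the stereographic argument of Lemma~\ref{lem:InterEuclid}, but use planar convexity directly. Fix $\r>0$ and let $K=\bB(0;1)$ be the unit ball of $V$; it is a compact convex planar body. Every open ball $B(x_i;r_i)$ is convex, so the intersection $P\coloneqq\cap_iB(x_i;r_i)$ is an open convex set of diameter less than $2\r$. We need to show that $W\cap P$ is empty or contractible, where $W=V\sm\bB(0;t')$ and $t'$ is large. Since $P$ is an open planar set, $W\cap P$ is an open subset of $\R^2$. Such a set is contractible if and only if it is connected and simply connected, that is, connected with connected complement in the sphere. So the whole task reduces to two topological claims about $P\sm\bB(0;t')$.

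\emph{Step 1 (no holes).} We show that no bounded component of the complement of $P\cap W$ lies inside $P$. The complement of $P\cap W$ is $(V\sm P)\cup\bB(0;t')$. Both pieces are connected, because $V\sm P$ is the complement of a bounded convex set in the plane and $\bB(0;t')$ is convex. They intersect as soon as $P\cap W\ne\0$ and $t'$ is large: indeed, $\bB(0;t')$ is unbounded relative to $P$, so it meets $V\sm P$. Hence the complement is connected, and therefore every component of $P\cap W$ is simply connected.

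\emph{Step 2 (connectedness).} This is the main obstacle. We must show that $P\sm\bB(0;t')$ has one component once $t'>t$ for a suitable $t$. Here I would use that at scale $t'$ the ball $\bB(0;t')$ looks flat compared with $P$, whose diameter is below $2\r$. Suppose $p,q\in P\sm\bB(0;t')$ and the segment $[p,q]\ss P$ dips into $\bB(0;t')$. Choose a supporting line $\ell$ of $\bB(0;t')$ at the point of that segment deepest inside, measured in the gauge of $K$. Then move $p$ and $q$ outward, along the direction away from $\ell$, while staying in $P$.

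To make this rigorous, I would parametrize the boundary of $t'K$ locally as the graph of a convex function $g$ over $\ell$, with $g$ concave toward the origin. The set $P$ is convex. Since $P$ is contained in a ball of radius $\r$, the chord of $\partial(t'K)$ cut off inside $P$ has length at most $2\r$. The graph region above $g$ intersected with the convex set $P$ is then the region of $P$ lying above a concave curve, viewed from outside.

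One can use the following planar fact: if $P$ is convex and $U$ is the complement of a convex set $D$, then $P\cap U$ can be disconnected only if $D$ separates $P$, that is, only if $D\cap P$ contains a chord of $P$ joining two boundary arcs. But $P\sm D$ being nonempty on both sides of $D$ would force $D$ to cross $P$ in a strip. By convexity of $D$ and boundedness of $P$, this means $P$ meets two ``opposite'' parts of $\partial D$, namely points of $\partial(t'K)$ with roughly opposite outer normals. For $t'$ larger than a multiple of $\r$ (depending on $K$), any two points of $\partial(t'K)$ with normals differing by more than a fixed angle are at distance exceeding $2\r$. So this configuration is impossible, and we choose $t$ accordingly.

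The delicate point is that in a non-strictly convex or non-smooth norm, normals are not unique and $\partial K$ has corners and flat pieces. I would handle this by using the fact that a line separating $P\sm D$ into two pieces must cross $D\cap P$ completely, so $D\cap P$ contains a segment with both ends on $\partial P$ and both sides of $P$ outside $D$. Convexity of $D$ then gives two supporting lines of $D$ through $P$ bounding a strip containing the segment. As $t'\to\infty$, the rescaled picture $P/t'$ shrinks to a point on $\partial K$, and two supporting lines of $K$ through nearby boundary points cannot bound $K$ from two sides. Making this compactness limit uniform over centers and radii, using compactness of $\partial K$, is the hardest part, and I would carry it out by contradiction with a convergent subsequence.
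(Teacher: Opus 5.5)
\paragraph{Verdict.} Your reduction and Step~1 are fine. The gap is Step~2: it uses only convexity and small diameter of $P$, and in that generality the connectedness claim is false.

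\paragraph{What is correct.} An open planar set is contractible iff it is connected and its complement in $S^2$ is connected. For $t'>\rho$ you correctly show that every component of $P\setminus\bar{B}(0;t')$ is simply connected.

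\paragraph{Where Step 2 fails.} The problem is not just making the compactness argument uniform. Your reasoning uses only that $P$ is convex with $\operatorname{diam}P\le 2\rho$ and that $D=t'K$ is a large homothet of $K$. For such $P$ the conclusion fails in two ways.
\begin{itemize}
\item The paper's own remark after Lemma~\ref{lem:CylNorms} gives a counterexample. For the $\max$-norm in the plane, $t'K$ is a square, and a small Euclidean disk placed near a vertex meets the outside of the square in two components, for every $t'$. This also refutes your claim that points of $\partial(t'K)$ whose normals differ by a fixed angle are more than $2\rho$ apart: the claim fails at every corner of $K$, however large $t'$ is.
\item The inference ``$D\cap P$ separates $P$ $\Rightarrow$ $P$ meets parts of $\partial D$ with roughly opposite normals'' is false even for the Euclidean norm. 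Take a thin rectangle of length $2\ell<2\rho$, placed horizontally just below the top point of the circle $\partial(t'K)$ and within the sagitta $\approx\ell^2/(2t')$. It sticks out of $t'K$ at both ends and lies inside it in the middle, so $P\setminus t'K$ has two components. Yet the normals of $\partial(t'K)$ along $P$ are nearly parallel.
\end{itemize}
Separation only requires $\partial P\cap D$ to have two components. That can happen with nearly parallel normals, or at a corner of $D$, with no strip bounded by opposite supporting lines. So the configuration your rescaling argument excludes is not the one that needs excluding.

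\paragraph{The missing ingredient.} You never use that $P$ is an intersection of homothets $x_i+r_iK$ of the \emph{same} body with $r_i\le\rho$. In the two examples $P$ would then have to be a rectangle, respectively an intersection of disks of radius $\le\rho$, and the phenomenon disappears. This is also where dimension $2$ must enter, since the paper lists $3$-dimensional norms without the intersection property.

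\paragraph{A route that should work (sketch).}
\begin{enumerate}
\item In the plane, one can show, e.g.\ by comparing length measures on $S^1$, that $\bar P=\bigcap_i(x_i+r_iK)$ is a Minkowski summand of $t'K$ for every $t'\ge\rho$. Each face of $\bar P$ with outer normal $u$ lies in the face with normal $u$ of some active $x_i+r_iK$. Hence the length measure of $\bar P$ is at most $\rho$ times that of $K$, and in the plane this characterizes summands.
\item Write $t'K=\bar P+Q$. Then $h_{\bar P}(u)>h_{t'K}(u)$ iff $h_Q(u)<0$, and the set of such $u$ is an open arc of directions.
\item $P\setminus t'K$ is the union, over $u$ in this arc, of the convex sets $\{z\in P:\langle u,z\rangle>h_{t'K}(u)\}$. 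Sets for nearby $u$ intersect, by continuity of $h_{t'K}$. Hence $P\setminus t'K$ is connected, and your Step~1 then finishes the proof.
\end{enumerate}
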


\begin{lemma}[\cite{Ilyukhin}]
\label{lem:CylNorms}
Let $B^2$ be a unit ball of an arbitrary norm in the plane $\R^2$, and let $B^n$ be Cartesian product of $B_2$ with the $(n-2)$-dimensional cube $K^{n-2}\ss\R^{n-2}$.
Then the norm in $\R^n$ with the unit ball $B^n$ satisfies the intersection property.
\end{lemma}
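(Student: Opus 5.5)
Write a point of $\R^n=\R^2\x\R^{n-2}$ as $(u,w)$. Since the unit ball is $B^n=B^2\x K^{n-2}$, the norm is $\|(u,w)\|=\max\bigl\{\|u\|_2,\|w\|_\infty\bigr\}$. Here $\|\cdot\|_2$ is the planar norm with unit ball $B^2$, and $\|\cdot\|_\infty$ is the max-norm on $\R^{n-2}$. The plan is to reduce the statement to the two factors, using Lemma~\ref{lem:ContrInter} for the plane and Lemma~\ref{lem:MaxNorm} for the cube factor, and then glue. The case $n=2$ is Lemma~\ref{lem:ContrInter} itself, so assume $n\ge3$.

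\emph{Splitting balls and the complement.} Every open ball is a product, $B\bigl((u_i,w_i);r_i\bigr)=B_2(u_i;r_i)\x Q(w_i;r_i)$, where $Q$ denotes an open max-norm ball (a cube). Hence $P\coloneqq\cap_iB(x_i;r_i)=P_2\x Q$, where $P_2=\cap_iB_2(u_i;r_i)$ and $Q=\cap_iQ(w_i;r_i)$, all radii being $\le\r$. For $t'>0$ write $A=\R^2\sm\bB_2(0;t')$ and $D=\R^{n-2}\sm\bB_\infty(0;t')$. Then
$$
W\coloneqq V\sm\bB(0;t')=(A\x\R^{n-2})\cup(\R^2\x D),
$$
so
$$
W\cap P=\bigl((A\cap P_2)\x Q\bigr)\cup\bigl(P_2\x(D\cap Q)\bigr).
$$
Let $t_1$ and $t_2$ be the constants given for $\r$ by Lemma~\ref{lem:ContrInter} (for the plane) and Lemma~\ref{lem:MaxNorm} (for $\R^{n-2}$ with the max-norm), and set $t=\max\{t_1,t_2\}$. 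For every $t'>t$, each of $A\cap P_2$ and $D\cap Q$ is then empty or contractible. Also $P_2$ and $Q$ are convex, so they are empty or contractible.

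\emph{Case analysis.} If $P_2$ or $Q$ is empty, then $W\cap P=\0$. Otherwise, if $A\cap P_2=\0$, then $W\cap P=P_2\x(D\cap Q)$, which is empty or a product of contractible sets. The case $D\cap Q=\0$ is symmetric. In the remaining case both pieces $U_1=(A\cap P_2)\x Q$ and $U_2=P_2\x(D\cap Q)$ are nonempty, open in $W\cap P$, and contractible. Their intersection $U_1\cap U_2=(A\cap P_2)\x(D\cap Q)$ is also nonempty and contractible.

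\emph{Gluing (main obstacle).} It remains to show that a union of two open contractible sets with nonempty contractible (hence path-connected) intersection is contractible. By van Kampen, $W\cap P$ is simply connected. By Mayer--Vietoris, its reduced homology vanishes. Hurewicz then gives that all homotopy groups vanish. Since $W\cap P$ is an open subset of $\R^n$, it has the homotopy type of a CW complex, so Whitehead's theorem yields contractibility. I expect this last step to be the only delicate point. The argument needs the pieces to be open, which holds here because $A$, $D$, $P_2$ and $Q$ are all open. Alternatively, one could try to mimic the explicit contraction in Lemma~\ref{lem:MaxNorm}, but the homotopy-theoretic route seems cleaner.
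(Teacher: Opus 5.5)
Your proof is correct. The paper gives no proof of this lemma; it is quoted from \cite{Ilyukhin}, so there is no in-paper argument to compare with. What you give is a clean reduction of the cylindrical case to Lemma~\ref{lem:ContrInter} and Lemma~\ref{lem:MaxNorm}. Lemma~\ref{lem:ContrInter} is itself only cited, and the real difficulty lives there. The splitting $P=P_2\times Q$, the description $W=(A\times\mathbb{R}^{n-2})\cup(\mathbb{R}^2\times D)$, the case analysis, and the identification $U_1\cap U_2=(A\cap P_2)\times(D\cap Q)$ are all right.

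Two remarks on what your route buys.

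First, nothing in it uses the specific factors. It shows that if $V_1$ and $V_2$ have the intersection property, then so does their $\ell_\infty$-sum $V_1\oplus_\infty V_2$, taking $t=\max\{t_1,t_2\}$ for the given $\rho$. Now $\mathbb{R}$ with $|\cdot|$ has the property trivially: for $t'>\rho$, an open interval of length at most $2\rho$ cannot meet both components of $\mathbb{R}\setminus[-t',t']$. So your argument re-proves Lemma~\ref{lem:MaxNorm} by induction. It also covers unit balls such as $B^2_1\times B^2_2\times K^m$ with several planar factors. If $K^{n-2}$ is not the standard cube, its gauge is still linearly isometric to the max-norm, and the intersection property is invariant under linear isometries.

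Second, the step you call delicate follows at once from a tool the paper already uses. The family $\{U_1,U_2\}$ is a good open cover of the metric, hence paracompact, space $W\cap P$. By Lemma~\ref{lem:nerve}, $W\cap P$ is therefore homotopy equivalent to the nerve of this cover, which is a single edge, so $W\cap P$ is contractible. This avoids van Kampen, Hurewicz, Whitehead and the CW-type question, though your chain of arguments is also valid.
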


We call such norms \emph{cylindrical}.

\begin{remark}
Let us note that not all normed spaces satisfy the intersection property.
One can construct counterexamples in the normed $3$-space where the unit ball is the icosahedron, dodecahedron, or octahedron; see Figure~\ref{fig:IcosaDodecOcta}.

\ig{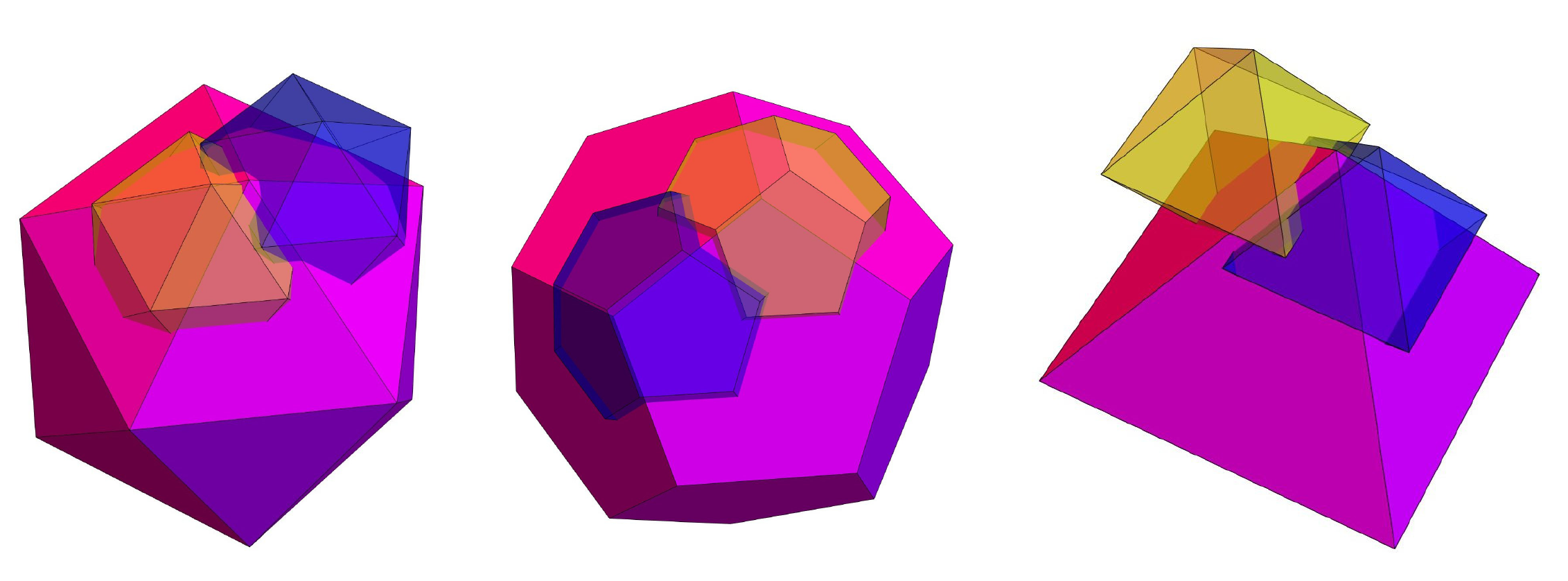}{0.27}{fig:IcosaDodecOcta}{The norms with unit ball the icosahedron, dodecahedron, or octahedron do not satisfy the intersection property.}

The situation is more intriguing than just these examples.
Let $x$, $y$, $z$ be the Cartesian coordinates of $\R^3$.
Consider the infinite square tube $T\coloneqq \max\bigl\{|x|,|y|\bigr\}\le a$ for some $a<1$.
Let $D^3$ be the standard Euclidean ball in $\R^3$, and define $B=T\cap D^3$.
Let $a$ be sufficiently small such that $B$ is a bounded part of the square tube $T$ with two spherical cups on its ends.
We create the norm whose unit ball is $B$.

Now we use the fact that for any square in the Euclidean plane, we can take an arbitrarily small disk and place it near a vertex of the square in such a way that the intersection of the disk with the complement of the square consists of two connected components.
This leads us to take a shrunk copy $B_1$ of $B$, shifted in such a way so that $(\R^3\sm B)\cap B_1$ is not convex.
This non-convexity enables us to take the second shrunk copy $B_2$ and obtain an intersection $(\R^3\sm B)\cap B_1\cap B_2$ that is not connected; see Figure~\ref{fig:NormedBall}.

\ig{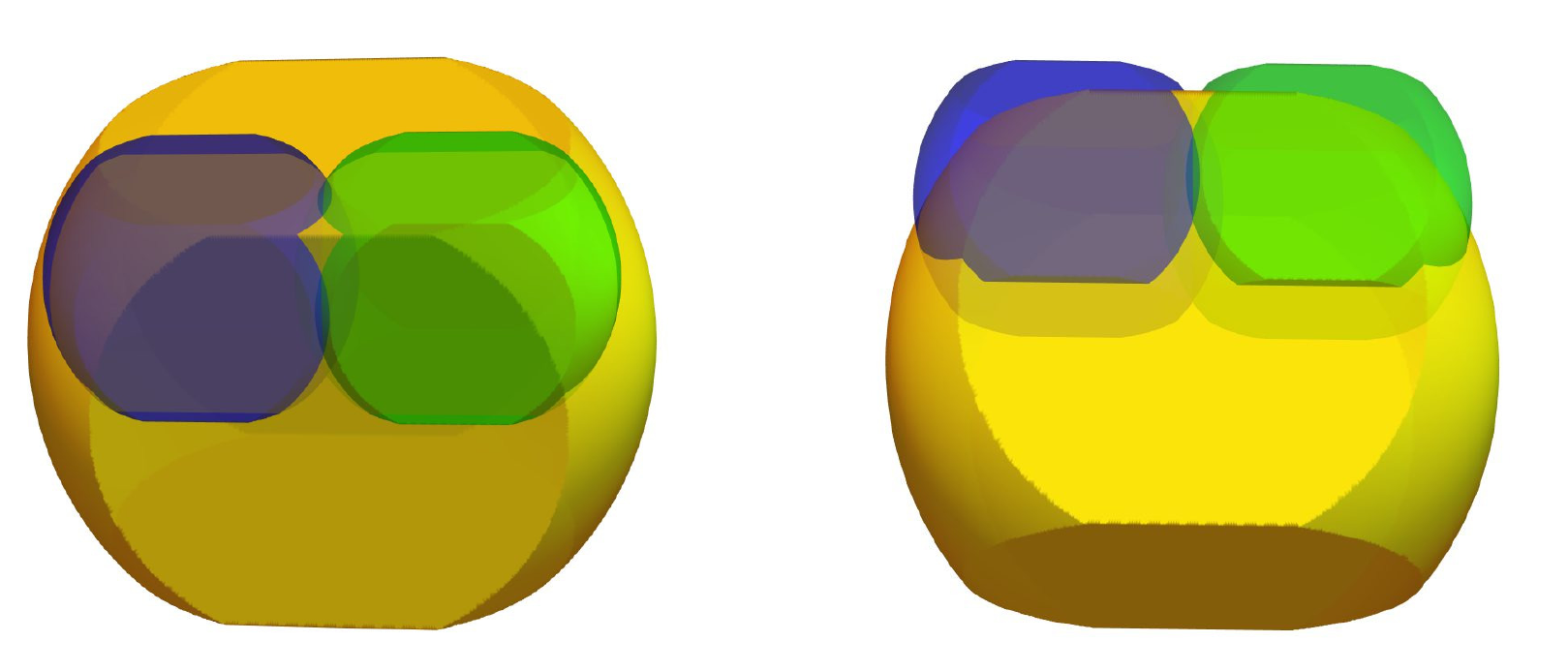}{0.27}{fig:NormedBall}{A norm which does not satisfy the intersection property, with $B$ in yellow, $B\cap B_1$ in blue, and $B\cap B_2$ in green.
The set $(\R^3\sm B)\cap B_1\cap B_2$ is not connected.
}

\end{remark}

\begin{problem}
Describe all finite-dimensional normed spaces satisfying the intersection property.
\end{problem}

%%%%%%%%%%%%%%%%%%%%%%%%%%%%%%
\section{Preliminaries towards a stronger estimate}\label{sec:complexes}
\markright{\thesection.~Preliminaries towards a stronger estimate}
%%%%%%%%%%%%%%%%%%%%%%%%%%%%%

In the next section, we will provide a stronger estimate of the Gromov--Hausdorff distance for normed spaces satisfying the intersection property.
First, we recall the necessary preliminary material here.

%%%%%%%%%%%%%%%%%%%%%%%%%%%%%%
\subsection{\v{C}ech complexes}
%%%%%%%%%%%%%%%%%%%%%%%%%%%%%%
Given a metric space $Z$, its subset $X$, and a real number $r>0$, the (ambient) \emph{\v{C}ech complex~$\hhC(X,r)$} is the simplicial complex with the vertex set $X$, such that $\s\coloneqq \{x_0,\ldots,x_k\}\ss X$ is a $k$-simplex of $\hhC(X,r)$ iff there exists $z\in Z$ for which $\{x_0,\ldots,x_k\}\ss B(z;r)$.

%%%%%%%%%%%%%%%%%%%%%%%%%%%%%%
\subsection{Coverings, nerves and \v{C}ech complexes}
%%%%%%%%%%%%%%%%%%%%%%%%%%%%%%
Given a topological space $Z$, a family $\{U_\a\}_{\a\in A}$ of its open subsets is said to be an \emph{open covering of $Z$} if $\cup_{\a\in A} U_\a=Z$.
A covering is said to be \emph{good} if any finite non-empty intersection~$\cap_{i=1}^kU_{\a_i}$ of its elements is contractible.
The \emph{Nerve complex~$\cN\bigl(\{U_\a\}\bigr)$ of the covering~$\{U_\a\}$} is the simplicial complex with the vertex set~$A$, such that $\s=\{\a_0,\ldots,\a_k\}\ss A$ is a $k$-simplex iff $\cap_{i=0}^k U_{\a_i}$ is not empty.
The following result is referred to as Leray's nerve lemma.

\begin{lemma}[{\cite[Corollary 4G.3]{Hatcher}}]
\label{lem:nerve}
Let $Z$ be a paracompact topological space, i.e., each of its open coverings has a locally finite refinement, and let $\{U_\a\}$ be a good open covering.
Then the nerve complex $\cN\bigl(\{U_\a\}\bigr)$ is homotopy equivalent to the space $Z$.
\end{lemma}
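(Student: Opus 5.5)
Since the statement is the classical nerve lemma, I would follow the standard approach through an intermediate \emph{blow-up} space that projects onto both $Z$ and the nerve. For each finite simplex $\s=\{\a_0,\ldots,\a_k\}$ of $\cN\bigl(\{U_\a\}\bigr)$, write $U_\s\coloneqq\cap_{i=0}^kU_{\a_i}$ (non-empty by the definition of the nerve). Let $|\s|$ be the geometric simplex, and define
$$
\D_Z=\bigcup_{\s}U_\s\x|\s|\ \ss\ Z\x\bigl|\cN\bigl(\{U_\a\}\bigr)\bigr|,
$$
glued along faces in the obvious way: $U_\s\x|\tau|\ss U_\tau\x|\tau|$ whenever $\tau\ss\s$. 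This space has two projections, $p\:\D_Z\to Z$ and $q\:\D_Z\to\bigl|\cN\bigl(\{U_\a\}\bigr)\bigr|$. The plan is to show that each is a homotopy equivalence.

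\textbf{Step 1: $p$ is a homotopy equivalence.} This is where paracompactness enters. I take a partition of unity $\{\varphi_\a\}$ subordinate to $\{U_\a\}$. Strictly, one needs a locally finite refinement first; I would take its functions, push them forward to the indices $\a$, and check that local finiteness is preserved. Then I define a section
$$
s\:Z\to\D_Z,\qquad s(z)=\Bigl(z,\ \sum_\a\varphi_\a(z)\,\a\Bigr).
$$
This lands in $\D_Z$, because the $\a$ with $\varphi_\a(z)\ne0$ all contain $z$, so they span a simplex $\s$ with $z\in U_\s$. Clearly $p\circ s=\mathrm{id}_Z$. For the other composite, the fiber $p^{-1}(z)$ is the full simplex on $\{\a:z\in U_\a\}$, which is convex. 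Hence the straight-line homotopy $(z,t)\mapsto\bigl(z,(1-\l)t+\l\sum_\a\varphi_\a(z)\a\bigr)$ stays inside $\D_Z$ and joins $s\circ p$ to the identity. The only care needed is continuity in the weak (CW) topology, which follows from local finiteness.

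\textbf{Step 2: $q$ is a homotopy equivalence.} This is where goodness enters, and I expect it to be the main obstacle. The fibers of $q$ over the interior of $|\s|$ are $U_\s$, which are contractible but not uniformly so, so no single global homotopy is available. I would instead view $\D_Z$ as the realization of a \emph{complex of spaces} over the nerve, with each $U_\s$ sitting over $|\s|$. The map to the realization of the corresponding complex of points (which is exactly the nerve) is induced by the maps $U_\s\to\mathrm{pt}$, and all of these are homotopy equivalences. I would then invoke, or reprove by induction over skeleta, the gluing principle (Hatcher's Proposition~4G.1 and Corollary~4G.2): a map of complexes of spaces that is a homotopy equivalence on each piece induces a homotopy equivalence of realizations. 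The inductive step treats the passage from the $(k-1)$-skeleton to the $k$-skeleton as a pushout along a cofibration $U_\s\x\partial|\s|\hookrightarrow U_\s\x|\s|$. This uses the gluing lemma for homotopy equivalences, plus a colimit (telescope) argument to handle infinite dimension.

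Combining the two steps, $Z\simeq\D_Z\simeq\bigl|\cN\bigl(\{U_\a\}\bigr)\bigr|$, which proves the lemma. In the paper's application, $Z$ will be an open subset of a finite-dimensional normed space, hence metrizable and therefore paracompact by Stone's theorem.
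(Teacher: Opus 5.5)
Your proposal is correct and follows essentially the same route as the proof behind the result the paper cites without proof (Hatcher's Corollary~4G.3): your $\D_Z$ is Hatcher's realization $\Delta X_{\mathcal U}$ of the complex of spaces $\{U_\s\}$, Step~1 is his partition-of-unity argument for the projection to $Z$, and Step~2 is his gluing theorem for complexes of spaces applied to the collapse $U_\s\to\mathrm{pt}$. The one thing to make explicit is that the partition of unity in Step~1 needs $Z$ to be Hausdorff (paracompact Hausdorff spaces are normal); the paper's ``i.e.''\ drops this condition, it is genuinely needed (a finite non-Hausdorff model of a hexagon, covered by the minimal open neighbourhoods of its edges, has a good cover with nerve $\simeq S^1$ but is not homotopy equivalent to $S^1$), and it holds for the metric spaces used in the application.
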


In particular, Lemma~\ref{lem:nerve} holds for any metric space $Z$, because each metric space is paracompact~\cite[Corollary 1, p. 979]{Stone}.

%%%%%%%%%%%%%%%%%%%%%%%%%%%%%%
\subsection{Vietoris--Rips complexes}
%%%%%%%%%%%%%%%%%%%%%%%%%%%%%%
Given a metric space $X$, the \emph{Vietoris--Rips complex $\VR(X,r)$} is the simplicial complex with the vertex set $X$, such that $\s\coloneqq \{x_0,\ldots,x_k\}\ss X$ is a $k$-simplex of $\VR(X,r)$ iff $\diam\s<r$.

The next lemma was proved in~\cite{ChazalDeSilvaOudot2014}; see also~\cite{HvsGH}.

\begin{lemma}\label{lem:correspondence-vr-general}
Let $X$ and $Y$ be non-empty metric spaces such that $r>2d_\gh(X,Y)$ for some positive real $r$.
Choose arbitrary mappings $h\:Y\to X$ and $g\:X\to Y$ such that $R\coloneqq h\cup g^{-1}\ss Y\x X$ is a correspondence with $\dis R<r$, where $g^{-1}$ is the relation inverse to $g$.
Then for any $\e>0$ and $\nu\ge0$, the mappings $h$ and $g$ induce the simplicial mappings $\oh$ and $\og$, respectively, 
\begin{equation*}
\begin{tikzpicture} [baseline=(current  bounding  box.center)]
\centering
\node (k2) at (-4.7,0) {$\vr{Y}{\e}$};
\node (k3) at (-1.6,0) {$\vr{X}{r+\e}$};
\node (k5) at (2.5,0) {$\vr{X}{r+\e+\nu}$};
\node (k6) at (6.8,0) {$\vr{Y}{2r+\e+\nu}$,};
\draw[map] (k2) to node[auto] {$\oh$} (k3);
\draw[map] (k5) to node[auto] {$\og$} (k6);
\draw[rinclusion] (k3)  to node[auto] {$\i_\nu$} (k5);
\end{tikzpicture}
\end{equation*}
where $\i_\nu$ is the inclusion, and the composition $\og\circ\i_\nu\circ\oh$ is contiguous to the inclusion $\vr{Y}{\e}\hookrightarrow\vr{Y}{2r+\e+\nu}$.
\end{lemma}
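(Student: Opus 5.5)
The plan is to check everything directly on vertices, using only the definition of distortion. The maps $\oh$ and $\og$ are defined on vertex sets by $h$ and $g$. Since Vietoris--Rips complexes are flag complexes determined by pairwise distances, and a simplex is finite, the strict inequality $\diam\s<r$ amounts to every pair of its vertices being at distance $<r$. So each claim reduces to a pairwise distance estimate.

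First, $\oh$ is simplicial. Let $\s=\{y_0,\ldots,y_k\}$ be a simplex of $\vr{Y}{\e}$, so $|y_iy_j|<\e$. By construction $(y_i,h(y_i))\in R$ and $(y_j,h(y_j))\in R$. Hence
$$
\bigl|h(y_i)h(y_j)\bigr|\le|y_iy_j|+\dis R<\e+r.
$$
Because there are finitely many pairs, $\diam h(\s)<r+\e$, and $h(\s)$ is a simplex of $\vr{X}{r+\e}$.

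Second, $\og$ is simplicial. Let $\{x_0,\ldots,x_k\}$ be a simplex of $\vr{X}{r+\e+\nu}$. We have $(g(x_i),x_i)\in R$ since $g^{-1}\ss R$. This gives
$$
\bigl|g(x_i)g(x_j)\bigr|<|x_ix_j|+r<2r+\e+\nu,
$$
so the image is a simplex of $\vr{Y}{2r+\e+\nu}$. The inclusion $\i_\nu$ is obviously simplicial.

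Third, contiguity. For a simplex $\s$ of $\vr{Y}{\e}$, it suffices to show that $\s\cup g\bigl(h(\s)\bigr)$ is a simplex of $\vr{Y}{2r+\e+\nu}$. Again this means checking all pairwise distances.
\begin{itemize}
\item Two points of $\s$ are at distance $<\e$.
\item Two points of $g(h(\s))$ are at distance $<2r+\e$, by combining the two previous steps with $\nu=0$. This is at most $2r+\e+\nu$.
\item For mixed pairs $y_i$ and $g(h(y_j))$, use the two pairs $(y_i,h(y_i))\in R$ and $(g(h(y_j)),h(y_j))\in R$. The distortion bound gives
$$
\bigl|y_i\,g(h(y_j))\bigr|<\bigl|h(y_i)h(y_j)\bigr|+r<(r+\e)+r\le 2r+\e+\nu.
$$
\end{itemize}
Taking the maximum over finitely many pairs gives strict inequality for the diameter. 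Hence $\og\circ\i_\nu\circ\oh$ is contiguous to the inclusion.

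The only delicate point is the mixed-pair estimate. It requires using $g^{-1}\ss R$, so that $g(x)$ and $x$ are related in the correct direction. With that observation, the rest is routine bookkeeping of strict inequalities.
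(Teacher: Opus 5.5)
Your proof is correct and is essentially the standard argument: the paper does not reprove this lemma but cites Chazal--de~Silva--Oudot and Adams--Frick--Majhi--McBride, where the same pairwise verification via $\dis R<r$ is carried out. The paper uses the same mixed-pair estimate, pairing $(z,h(z))$ with $(g(h(x_i)),h(x_i))$, in its own contiguity check \eqref{eq:strict} in the proof of Theorem~\ref{thm:gh-lower-jung-new}.
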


Recall that two simplicial maps are contiguous if for any simplex in the domain, its two images are contained in a common simplex of the codomain.
Contiguous simplicial maps induce homotopic maps on geometric realizations.

From the definition of the Jung constant, we immediately get the following lemma.

\begin{lemma}\label{lem:InclusionGeneral}
For any metric space $Z$, its non-empty subset $X$, and positive real $s$ and $r$,
\begin{itemize}
\item each simplex of $\VR(X;s)$ belongs to $\hhC\bigl(X;J(Z)s\bigr)$, and
\item each simplex of $\hhC(X;r)$ belongs to $\VR(X;2r)$.
\end{itemize}
Thus the identity map $Z\to Z$ generates the inclusions
$$
\i\:\VR(X;s)\to\hhC\bigl(X;J(Z)s\bigr)\ \ \text{and}\ \ \i'\:\hhC(X;r)\to\VR(X;2r),
$$
in particular,
$$
\VR(X;s)\xlongrightarrow{\i}\hhC\bigl(X;J(Z)s\bigr)\xlongrightarrow{\i'}\VR\bigl(X;2J(Z)s\bigr).
$$
\end{lemma}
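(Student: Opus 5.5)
We must prove Lemma~\ref{lem:InclusionGeneral}. The plan is to check the two simplex inclusions directly from the definitions, using only the definition of the Jung constant and the triangle inequality.

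\emph{First inclusion.} Let $\s=\{x_0,\ldots,x_k\}$ be a simplex of $\VR(X;s)$, so $\diam\s<s$. If $\diam\s=0$, then $\s$ is a single point $x_0$, and $\s\ss B\bigl(x_0;J(Z)s\bigr)$ because $J(Z)s>0$. Otherwise $\s\in\cB_{>0}(Z)$, and by the definition of $J(Z)$ we get $R(\s)\le J(Z)\diam\s<J(Z)s$.

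Since $R(\s)$ is an infimum, we can choose $z\in Z$ with $\sup_i|zx_i|<J(Z)s$. The supremum is over finitely many points, so it is attained, and hence every $x_i$ lies in the open ball $B\bigl(z;J(Z)s\bigr)$. Therefore $\s$ is a simplex of $\hhC\bigl(X;J(Z)s\bigr)$.

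The only subtle point is that we need strict inclusion in an open ball. This is exactly why we use the strict inequality $\diam\s<s$ and the fact that the infimum defining $R(\s)$ can be approached by a center $z$ with strictly smaller radius.

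\emph{Second inclusion.} Let $\s$ be a simplex of $\hhC(X;r)$, so $\s\ss B(z;r)$ for some $z\in Z$. For any $x_i,x_j\in\s$ the triangle inequality gives $|x_ix_j|\le|x_iz|+|zx_j|<2r$. Because $\s$ is finite, $\diam\s$ is a maximum of finitely many such distances, so $\diam\s<2r$. Hence $\s\in\VR(X;2r)$.

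Both complexes have the same vertex set $X$, so the identity on vertices gives the simplicial inclusions $\i$ and $\i'$. Composing them with $r=J(Z)s$ yields the displayed chain
\[
\VR(X;s)\xlongrightarrow{\i}\hhC\bigl(X;J(Z)s\bigr)\xlongrightarrow{\i'}\VR\bigl(X;2J(Z)s\bigr).
\]
No step here is a real obstacle. The only place requiring care is the strictness of the inequalities in the first inclusion, handled above by using the finiteness of $\s$.
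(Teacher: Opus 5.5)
Your proof is correct and is the direct unpacking of the definitions (Jung constant for the first item, triangle inequality plus finiteness of simplices for the second) that the paper has in mind when it says the lemma follows immediately from the definition of the Jung constant. The one implicit ingredient, $J(Z)>0$, comes from the paper's standing assumption that $Z$ has at least two points, which gives $J(Z)\ge 1/2$.
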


%%%%%%%%%%%%%%%%%%%%%%%%%%%%%%
\section{Strong estimate of the Gromov--Hausdorff distance}
\markright{\thesection.~Strong estimate of the Gromov--Hausdorff distance}
%%%%%%%%%%%%%%%%%%%%%%%%%%%%%%

\begin{theorem}
\label{thm:gh-lower-jung-new}
Let $V$ be a finite-dimensional normed space, $\dim V\ge1$, $J\coloneqq J(V)$ the Jung constant of $V$, and $X\ss V$ such that $d_\h(X,V)<\infty$.
If $V$ satisfies the intersection property, then $d_\gh(X,V)\ge\dfrac1{2J}d_\h(X,V)$.
\end{theorem}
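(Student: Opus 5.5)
We argue by contradiction, generalizing the topological approach of~\cite{HvsGH}. Suppose $d_\gh(X,V)<\frac1{2J}d_\h(X,V)$. Choose $r$ with $2d_\gh(X,V)<r$ and $Jr<d_\h(X,V)$. There is then a point $p\in V$ with $|pX|>Jr$. Since $d_\h(X,V)<\infty$, we may also fix a large scale: take $\rho$ slightly bigger than $J(2r+\e+\nu)$ and let $t$ be given by the intersection property for this $\rho$. By homogeneity of the norm we can rescale. Equivalently, since $X$ is a net, we can find points $p$ far from $X$ relative to $Jr$, and translate so that the "hole" sits where we want. The strategy is to detect this hole homologically: the \v{C}ech complex of $X$ at radius slightly above $Jr$, taken inside a large region, should see a nontrivial $(n-1)$-sphere around $p$. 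Meanwhile $\VR(V;\e)$ is contractible, and the maps of Lemma~\ref{lem:correspondence-vr-general} force that class to die.

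\textbf{Step 1.} Use Lemma~\ref{lem:correspondence-vr-general} with $Y=V$ and with $h\colon V\to X$, $g\colon X\to V$ taken from a correspondence of distortion $<r$. Compose with the inclusions of Lemma~\ref{lem:InclusionGeneral}:
$$
\VR(V;\e)\xrightarrow{\oh}\VR(X;r+\e)\xrightarrow{\i}\hhC\bigl(X;J(r+\e)\bigr)\hookrightarrow\hhC\bigl(X;J(r+\e+\nu)\bigr).
$$

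\textbf{Step 2.} Identify $\hhC(X;s)$, for $Js<|pX|$, via the nerve lemma (Lemma~\ref{lem:nerve}) with the open set $U_s=\bigcup_{x\in X}B(x;s)\subseteq V\setminus\{p\}$. This requires a good cover, but balls are convex, so any finite intersection is convex, hence contractible. The point $p\notin U_s$, yet $U_s$ contains a large sphere $\{|v-p|=T\}$ for $T$ big. The key claim is that this sphere is essential in $U_s$ relative to far-away data. Here the intersection property enters. The delicate part is the \emph{localized} version: we must work with $X\setminus\bB(0;t')$ or with the complement of a big ball in order to compare different scales. In that setting the cover pieces are $B(x_i;r_i)\cap(V\setminus\bB(0;t'))$, which the intersection property guarantees to be contractible. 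This is what gives a good cover when we pass to the relative/exterior version needed to move cycles "to infinity".

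\textbf{Step 3.} Take an $(n-1)$-cycle $z$ in $\VR(V;\e)$ triangulating a large sphere around $p$ (translate $V$ so $p=0$). Its image under $\og\circ\i_\nu\circ\oh$ is contiguous to the inclusion, so in $\VR(V;2r+\e+\nu)$ it bounds, which is consistent. We need the contradiction on the $X$-side instead. Inside $\hhC(X;J(r+\e))$, the image of $z$ must be a nontrivial class, because it is realized (via the nerve map) as a cycle in $U\subseteq V\setminus\{p\}$ winding once around $p$. On the other hand $z$ bounds in $\VR(V;\e)$ (a triangulated large ball). So the image of $z$ bounds in the \v{C}ech complex, hence in $U$, contradicting the winding around $p$. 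To make "winds once" rigorous, we compare the image with $z$ itself. The map $\oh$ moves points by at most about $d_\h$-controlled amounts, and $\og\circ\oh$ is close to the identity up to $2r$. Degree arguments therefore need the sphere radius much larger than $r$ and $d_\h$, and the intersection property is used to make the nerve identification work on the complement of $\bB(0;t')$, where the comparison is done.

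The main obstacle is Step 3: verifying that the image of the big sphere in $U_{J(r+\e)}\subseteq V\setminus\{p\}$ has nonzero degree around $p$ while $\oh$ itself is not close to the identity (only $\og\circ\oh$ is). My first attempt would be to run the sphere far out, in the exterior region $V\setminus\bB(0;t')$. There the nerve lemma, justified by the intersection property, lets us compare exterior nerves at different scales. Since $g\circ h$ stays within $r$ of the identity, the image cycle is homologous in the exterior to the original sphere, which has degree one around $p$. Finally, letting $\e,\nu\to0$ gives $d_\h(X,V)\le 2J\,d_\gh(X,V)$.
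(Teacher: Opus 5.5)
\textbf{There is a genuine gap: Step~3 does not go through as proposed.} Your outline has the right ingredients: the maps $\oh,\og$ with $\og\circ\oh$ contiguous to the inclusion, a hole $p$ with $|pX|>J(r+\varepsilon)$, the nerve lemma, and the intersection property for exterior pieces. The mechanism you propose for the contradiction, however, does not work.

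First, in Step~2 you assert that $U_s=\bigcup_{x\in X}B(x;s)$ contains every large sphere about $p$. This is false in general, because $s<d_{\mathrm H}(X,V)$ allows uncovered points arbitrarily far out. For $X=\Z^n$ with the sup-norm and $s<1/2$, $U_s$ is a disjoint union of small cubes.

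More seriously, Step~3 needs the cycle $\oh(z)$ to have nonzero winding number about $p$ inside $U$, and nothing you cite gives this. The map $h$ is only an approximate isometry and need not be close to the identity; for a lattice $X$ you may compose it with a large lattice translation without changing the distortion. The fact that $g\circ h$ is $r$-close to the identity only controls $\og\oh(z)$ in the $V$-side complex. The union of that cover contains $p$, so no winding information about $p$ survives there. To make your route work you would need extra input the proposal does not supply: a stability theorem for $\varepsilon$-isometries, and a realization of nerve cycles in $U$ compatible with winding numbers.

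\textbf{What is missing.} The hole should be used only through top-dimensional (relative) homology, which needs no degree computation for $h$. The paper's proof proceeds as follows.
\begin{enumerate}
\item It adds to each of the three covers an exterior set $U_1\subseteq U_2\subseteq U_3$. These are complements of large norm-balls about the uncovered point $y$, of radii $t+9r$, $t+6r$, $t$.
\item It checks that $\oh$ and $\og$ extend simplicially via $U_1\mapsto U_2\mapsto U_3$. This uses that the distortion bound $<r$, measured from a fixed base point, sends points far from $y$ to points far from $y$.
\item It checks that $\og\circ\oh$ remains contiguous to the inclusion.
\item The intersection property makes the enlarged covers good. After stereographic projection, the two $V$-side nerves are homotopy equivalent to $S^n$. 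The $X$-side nerve is homotopy equivalent to a proper open subset of $S^n$, since its union misses $y$.
\item Then $H_n(S^n)\to 0\to H_n(S^n)$ would compose to an isomorphism, which is a contradiction.
\end{enumerate}

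In your own language: since $U\cup E_2$ misses $y$, where $E_2$ is the exterior region, excision gives $H_n(U,U\cap E_2)=0$. Hence the fact that $\oh(z)$ bounds in $U$ forces it to bound already in the exterior part. Applying $\og$ then makes $\og\oh(z)$, and hence $z$, bound in the exterior $V$-side nerve. That nerve is homotopy equivalent to $S^{n-1}$ with $[z]$ a generator, so this is impossible.
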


\begin{proof}
Suppose otherwise, i.e., $d_\gh(X,V)<\dfrac1{2J}d_\h(X,V)$.
Hence there exist positive $r$ and $\e$ such that
$$
2d_\gh(X,V)<r<r+\e<\dfrac1{J}d_\h(X,V)\ \ \text{and}\ \ (r+\e)J<2r.
$$
Since $r>2d_\gh(X,V)$, there exists a correspondence $C\in\mathcal{R}(V,X)$ with $\dis C<r$.
Let $h\:V\to X$ and $g\:X\to V$ be mappings such that $h,g^{-1}\ss C$, where $g^{-1}$ denotes the relation inverse to $g$.

By Lemmas~\ref{lem:InclusionGeneral} and~\ref{lem:correspondence-vr-general}, we get the following commutative diagram:
\begin{equation*}
\begin{tikzpicture} [baseline=(current  bounding  box.center)]
\centering
\node (k2) at (-4.7,0) {$\vr{V}{\e}$};
\node (k3) at (-1.6,0) {$\vr{X}{r+\e}$};
\node (k4) at (1,-1.5) {$\cech{X}{(r+\e)J}$};
\node (k5) at (3,0) {$\vr{X}{2(r+\e)J}$};
\node (k6) at (8,0) {$\vr{V}{r+2(r+\e)J}$,};
\draw[map] (k2) to node[auto] {$\oh'$} (k3);
\draw[rinclusion] (k3) to node[auto] {$\iota$} (k4);
\draw[rinclusion] (k4) to node[auto] {$\iota'$} (k5);
\draw[map] (k5) to node[auto] {$\og'$} (k6);
\draw[rinclusion, dashed] (k3) to (k5);
\end{tikzpicture}
\end{equation*}
where  $\i$, $\i'$, and the dashed arrow are inclusions (see Lemma~\ref{lem:InclusionGeneral}), the mappings $\oh'$, $\og'$ are generated by the mappings $h$ and $g$ (see Lemma~\ref{lem:correspondence-vr-general}), and the composition $\og'\circ\i'\circ\i\circ\oh'$ is contiguous to the inclusion map.

Lemma~\ref{lem:InclusionGeneral} and the inequality $J(V)\le1$ allow us to extend this diagram by two inclusions $\b'$ and $\b$:
\begin{equation*}
\begin{tikzpicture} [baseline=(current  bounding  box.center)]
\centering
\node (k1) at (-4.7,-1.5) {$\cech{V}{\e/2}$};
\node (k2) at (-4.7,0) {$\vr{V}{\e}$};
\node (k3) at (-1.6,0) {$\vr{X}{r+\e}$};
\node (k4) at (1,-1.5) {$\cech{X}{(r+\e)J}$};
\node (k5) at (3,0) {$\vr{X}{2(r+\e)J}$};
\node (k6) at (8,0) {$\vr{V}{r+2(r+\e)J}$};
\node (k7) at (8,-1.5) {$\cech{V}{r+2(r+\e)J}$};
\draw[rinclusion] (k1) to node[auto] {$\b'$} (k2);
\draw[map] (k2) to node[auto] {$\oh'$} (k3);
\draw[rinclusion] (k3) to node[auto] {$\iota$} (k4);
\draw[rinclusion] (k4) to node[auto] {$\iota'$} (k5);
\draw[map] (k5) to node[auto] {$\og'$} (k6);
\draw[rinclusion, dashed] (k3) to (k5);
\draw[rinclusion] (k6) to node[auto] {$\b$} (k7);
\end{tikzpicture}
\end{equation*}
Let us mention that here and in many places in what follows we can replace $\cech{V}{r+2(r+\e)J}$ with $\cech{V}{(r+2(r+\e)J)J}$; however for the latter one the last inequality in~\eqref{eq:strict} does not hold.
Note that again $\b\circ\og'\circ\i'\circ\i\circ\oh'\circ\b'$ is contiguous to the corresponding inclusion because both $\b'$ and $\b$ are inclusions.
Thus, we get the following short diagram:
\begin{equation}\label{eq:shortDiag}
\cech{V}{\e/2}\xlongrightarrow{\oh}\cech{X}{(r+\e)J}\xlongrightarrow{\og}\cech{V}{r+2(r+\e)J},
\end{equation}
where $\oh=\i\circ\oh'\circ\b'$ and $\og=\b\circ\og'\circ\i'$, and $\og\circ\oh$ is contiguous to the inclusion
$$
\i''\:\cech{V}{\e/2}\hookrightarrow\cech{V}{r+2(r+\e)J}.
$$
Since the mappings $\b$, $\b'$, $\i$, and $\i'$ are inclusions, the simplicial mappings $\oh$ and $\og$ are generated by the same $h$ and $g$, respectively.

Since $(r+\e)J<d_\h(X,V)$, the collection $\Bigl\{B\bigl(x;(r+\e)J\bigr)\Bigr\}_{x\in X}$ of open balls does not cover $V$.
Let $y\in V\sm\cup_{x\in X}B\bigl(x;(r+\e)J\bigr)$.
Changing if necessary the Cartesian coordinates in $V$ by a parallel translation, we assume w.l.o.g.\ that $y$ is the origin.
Moreover, the same procedure, being applied to those copies of $V$ that contain the images of $g$ and $h$, enables us to assume that $y=h(y)=g\bigl(h(y)\bigr)$, still with $\dis h\le\dis C<r$ and $\dis g\le\dis C<r$.

By definition, the \v{C}ech complexes from~(\ref{eq:shortDiag}) are the nerve complexes of the corresponding families of open balls in $V$, namely,
\begin{equation}\label{eq:nerves11}
\cN\bigl(\{B(x;\e/2)\}_{x\in V}\bigr)\xlongrightarrow{\oh}
\cN\bigl(\{B(x;(r+\e)J)\}_{x\in X}\bigr)\xlongrightarrow{\og}
\cN\bigl(\{B(x;r+2(r+\e)J)\}_{x\in V}\bigr).
\end{equation}

Recall that the mappings $\oh$ and $\og$ were induced by the mappings $h$ and $g$ with $\dis h<r$ and $\dis g<r$.

Since all balls are convex, their intersections are convex and therefore contractible.
Thus the families of open balls are good, and so the geometric realizations of their nerves are homotopy equivalent to the unions of balls forming the families (see the remark after Lemma~\ref{lem:nerve}).
In what follows, we wish to add complements of sufficiently big balls in such a way that the reconstructed families will be good as well.
However, the complement of a ball is not convex, and the intersection of the complement of a ball with a convex set can be non-contractible.
%(consider the standard disk in the Euclidean plane and intersect it with a small square such that its upper side tangent to the upper point of the disk, and then shift slightly the square downward to get disconnected intersection).
%However, if the convex sets are obtained as the intersection of some balls with respect to the same norm, the situation is much better.
This is what leads to the intersection property.

For $n=\dim V$, consider Euclidean space $\R^{n+1}$ with the standard Cartesian coordinates $x^1,\ldots,x^{n+1}$.
Denote by $\|\cdot\|$ a norm on the coordinate subspace $\{x^{n+1}=0\}\ss\R^{n+1}$ such that this subspace with this norm is linear isometric to $V$.
Thus, in what follows, we denote by $V$ the coordinate subspace $\{x^{n+1}=0\}$ endowed with this norm.
In addition, for any $x,x'\in V$, we use $|xx'|$ for the distance between these points with respect to the $\|\cdot\|$-norm.
Let $p\:S^n\setminus\{N\}\to V$ be the standard stereographic projection from the standard unit sphere $S^n\ss\R^{n+1}$ to $V$, where $N=(0,\ldots,0,1)$ is the north pole of $S^n$.

Let $t>0$ be large enough so that it satisfies the condition of the intersection property with $\r=r+2(r+\e)J$.
Then we put
$$
U_3\coloneqq \{x\in V:\|x\|>t\},\ \ U_2=\{x\in V:\|x\|>t+6r\},\ \ U_1=\{x\in V:\|x\|>t+9r\},
$$
and $V_i=p^{-1}(U_i)\cup\{N\}\ss S^n$.

We now show that~\eqref{eq:nerves11} extends into a diagram of \emph{larger\/} nerve complexes and simplicial maps
\begin{multline}\label{eq:nerves21}
\cN\bigl(\{B(x;\e/2)\}_{x\in V}\cup\{U_1\}\bigr)\xlongrightarrow{\oh}\\
\xlongrightarrow{\oh}\cN\bigl(\{B(x;(r+\e)J)\}_{x\in X}\cup\{U_2\}\bigr)\xlongrightarrow{\og}\\
\xlongrightarrow{\og}\cN\bigl(\{B(x;r+2(r+\e)J)\}_{x\in V}\cup\{U_3\}\bigr)
\end{multline}
in which we define $\oh(U_1)=U_2$ and $\og(U_2)=U_3$.

Let us check that this new map $\oh$ remains simplicial.
Choose an arbitrary simplex $\D=\{W_0,\ldots,W_k\}\in\cN\bigl(\{B(x;\e/2)\}_{x\in V}\cup\{U_1\}\bigr)$, and assume that for $i=0,\ldots,k-1$, the sets $W_i$ are open balls $B(x_i;\e/2)$ in $\bigl(V,\|\cdot\|\bigr)$ for some points $x_i\in V$.
If $W_k=B(x_k;\e/2)$, then $\D\in\cN\bigl(\{B(x;\e/2)\}_{x\in V}\bigr)$, hence
$$
\oh(\D)\in\cN\bigl(\{B(x;(r+\e)J)\}_{x\in X}\bigr)\ss\cN\bigl(\{B(x;(r+\e)J)\}_{x\in X}\cup\{U_2\}\bigr)
$$
because of~\eqref{eq:nerves11}.
If $W_k=U_1$, then $\bigl(\cap_{i=0}^{k-1}W_i\bigr)\cap U_1\ne\0$, and let $z$ be a point from this intersection.
Then
$$
\bigl\{h(x_0),\ldots,h(x_{k-1}),h(z)\bigr\}\in\vr{X}{r+\e}\ss\cech{X}{(r+\e)J},
$$
hence there exists $w\in V$ such that $\bigl\{h(x_0),\ldots,h(x_{k-1}),h(z)\bigr\}\ss B_{(r+\e)J}(w)$, i.e., $\bigl|h(x_i)w\bigr|<(r+\e)J$ for all $i=0,\ldots,k-1$, and $\bigl|h(z)w\bigr|<(r+\e)J$.
Since $(r+\e)J<2r$, we have
$$
|yw|\ge\bigl|yh(z)\bigr|-\bigl|h(z)w\bigr|\ge|yz|-r-\bigl|h(z)w\bigr|>t+9r-r-2r=t+6r,
$$
thus $w\in U_2$, therefore $w\in\Bigl(\cap_{i=0}^{k-1}B\bigl(h(x_i);(r+\e)J\bigr)\Bigr)\cap U_2\ne\0$,  and we get $\oh(\D)\in\cN\bigl(\{B(x;(r+\e)J)\}_{x\in X}\cup\{U_2\}\bigr)$.

Let us check that the second new map $\og$ also remains simplicial.
Choose an arbitrary simplex $\D=\{W_0,\ldots,W_k\}\in\cN\bigl(\{B(x;(r+\e)J)\}_{x\in X}\cup\{U_2\}\bigr)$, and assume that for $i=0,\ldots,k-1$, the sets $W_i$ are open balls $B\bigl(x_i;(r+\e)J\bigr)$ in $\bigl(V,\|\cdot\|\bigr)$ for some $x_i\in X$.
If $W_k=B(x_k;(r+\e)J)$, $x_k\in X$, then, as above, $\D\in\cN\bigl(\{B(x;(r+\e)J)\}_{x\in V}\bigr)$, hence
$$
\og(\D)\in\cN\bigl(\{B(x;r+2(r+\e)J)\}_{x\in V}\bigr)\ss\cN\bigl(\{B(x;r+2(r+\e)J)\}_{x\in V}\cup\{U_3\}\bigr)
$$
because of~\eqref{eq:nerves11}.
If $W_k=U_2$, then there exists $z\in\bigl(\cap_{i=0}^{k-1}B(x_i;(r+\e)J\bigr)\cap U_2$, and we have
$$
\bigl\{g(x_0),\ldots,g(x_{k-1}),g(z)\bigr\}\in\vr{X}{r+2(r+\e)J}\ss\cech{X}{r+2(r+\e)J}.
$$
Hence there exists $w\in V$ such that $\bigl\{g(x_0),\ldots,g(x_{k-1}),g(z)\bigr\}\ss B_{r+2(r+\e)J}(w)$, i.e., $\bigl|g(x_i)w\bigr|<r+2(r+\e)J$ for all $i=0,\ldots,k-1$, and $\bigl|g(z)w\bigr|<r+2(r+\e)J$.
Since $(r+\e)J<2r$, then $\bigl|g(z)w\bigr|<5r$, and we have
$$
|yw|\ge\bigl|yg(z)\bigr|-\bigl|g(z)w\bigr|\ge|yz|-r-\bigl|g(z)w\bigr|>t+6r-r-5r=t,
$$
thus $w\in U_3$, therefore $w\in\Bigl(\cap_{i=0}^{k-1}B\bigl(g(x_i);r+2(r+\e)J\bigr)\Bigr)\cap U_3\ne\0$,  and we get $\og(\D)\in\cN\bigl(\{B(x;r+2(r+\e)J)\}_{x\in V}\cup\{U_3\}\bigr)$.

Let us show that the composition $\og\circ\oh$ in~\eqref{eq:nerves21} is contiguous to the inclusion map
$$
\cN\bigl(\{B(x;\e/2)\}_{x\in V}\cup\{U_1\}\bigr) \to \cN\bigl(\{B(x;r+2(r+\e)J)\}_{x\in V}\cup\{U_3\}\bigr)
$$
which maps $U_1$ to $U_3$.
Take an arbitrary simplex $\D=\{W_0,\ldots,W_k\}\in\cN\bigl(\{B(x;\e/2)\}_{x\in V}\cup\{U_1\}\bigr)$.
If all $W_i$ are balls, then the proof is the same as the one of~\cite[Lemma~2.1]{HvsGH}, because $\b$ and $\b'$ are inclusions.

Suppose now that $W_i=B(x_i;\e/2)$, $i=0,\ldots,k-1$, and $W_k=U_1$.
Choose an arbitrary $z\in\cap_{i=0}^kW_i$, then $|zx_i|<\e/2$ and $z\in U_1$.
Put $y_i=g\bigl(h(x_i)\bigr)$.
Since $\bigl(z,h(z)\bigr),\,\bigl(y_i,h(x_i)\bigr)\in C$, we get
\begin{equation}\label{eq:strict}
|zy_i|<\bigl|h(z)h(x_i)\bigr|+r<|zx_i|+2r<\e/2+2r\le r+2(r+\e)J,
\end{equation}
because $J\ge1/2$ (for any metric space).
Moreover, $z\in U_1\ss U_3$, thus
$$
z\in\bigl(\cap_{i=0}^{k-1}B(y_i;r+2(r+\e)J)\bigr)\cap\bigl(\cap_{i=0}^{k-1}B(x_i;r+2(r+\e)J)\bigr)\cap U_1\cap U_3,
$$
therefore both $\D$ and $\og\bigl(\oh(\D)\bigr)$ are contained in a simplex from $\cN\bigl(\{B(x;r+2(r+\e)J)\}_{x\in V}\cup\{U_3\}\bigr)$, i.e., the mappings $\og\circ\oh$ and $\i$ are contiguous.

Recall that by $p\:S^n\setminus\{N\}\to V$ we denoted the stereographic projection, where $N$ is the north pole of $S^n$.
Let $V_s(x)=p^{-1}\bigl(B(x;s)\bigr)$.
Denote by $\tih$ and $\tig$ the mappings $p^{-1}\circ\oh\circ p$ and  $p^{-1}\circ\og\circ p$, respectively, extended to $N$ as $\tih(N)=\tig(N)=N$.
We get the simplicial chain
\begin{multline}\label{eq:nerves31}
\cN\bigl(\{V_{\e/2}(x)\}_{x\in V}\cup\{V_1\}\bigr)\xlongrightarrow{\tih}\\
\xlongrightarrow{\tih}\cN\bigl(\{V_{(r+\e)J}(x)\}_{x\in X}\cup\{V_2\}\bigr)\xlongrightarrow{\tig}\\
\xlongrightarrow{\tig}\cN\bigl(\{V_{r+2(r+\e)J}(x)\}_{x\in V}\cup\{V_3\}\bigr).
\end{multline}

As we noticed above, the balls in the nerves from~\eqref{eq:nerves21} form a good family.
Also, when adding $U_i$, we again obtain good families due to the choice of $t$ and $\r$ as in the intersection property.
Therefore the open sets in the nerves from~\eqref{eq:nerves31} form good families as well.
Hence, by Lemma~\ref{lem:nerve}, all of the complexes in~\eqref{eq:nerves31} are homotopy equivalent to the unions of the open sets forming them.
Thus the first complex and the last complex are homotopy equivalent to $S^n$, but the middle complex is homotopy equivalent to an open proper subset $U$ of $S^n$.
Therefore, the $n$th homologies of the first and the last complexes equal $H_n(S^n)\ne0$, and of the middle one equals $H_n(U)=0$.

However, we have the corresponding sequence of homomorphisms
$$
H_n(S^n)\xlongrightarrow{\tih_*}H_n(U)\xlongrightarrow{\tig_*}H_n(S^n),
$$
such that $\tig_*\circ\tih_*=(\tig\circ\tih)_*$, and since $H_n(U)=0$, we get $(\tig\circ\tih)_*=0$.
Note $\tig\circ\tih$ is contiguous with the inclusion, which, by the nerve lemma, is a homotopy equivalence.
Thus $(\tig\circ\tih)_*$ is an isomorphism on the non-zero group $H_n(S^n)$, a contradiction.

Therefore, 
% it must be that $r+\e\ge \dfrac1{J}d_\h(X,V)$ for all $r > 2d_\gh(X,V)$ and $\e>0$ sufficiently close to $0$.
% So, 
$d_\gh(X,V)\ge\dfrac1{2J}d_\h(X,V)$, as desired.
\end{proof}

Recall that Theorem~\ref{thm:sup-equal}
states that for a subset $X$ of a finite-dimensional normed space~$V$ endowed with the sup-norm we have that $d_\gh(X,V) = d_\h(X,V)$, provided the latter is finite.

\begin{proof}[Proof of Theorem~$\ref{thm:sup-equal}$]
Since $V$ has the sup-norm, it has the intersection property by Lemma~\ref{lem:MaxNorm}, and also $J(V)=1/2$.
Therefore Theorem~\ref{thm:gh-lower-jung-new} gives 
$d_\gh(X,V)\ge\dfrac1{2J}d_\h(X,V)=d_\h(X,V)$.
\end{proof}

% \begin{corollary}
% For any finite-dimensional normed space $V$ endowed with sup-norm, and any its subset $X$ such that $d_\h(X,V)<\infty$, we have $d_\gh(X,V)=d_\h(X,V)$.
% \end{corollary}

%\begin{proof}[Proof of Theorem~$\ref{thm:sup-equal}$]
%It remains to verify the inequality $d_\gh(X,V)\ge d_\h(X,V)$ that follows from Theorem~\ref{thm:gh-lower-jung-new} because $J(V)=1/2$.
%\end{proof}

\begin{corollary}
Let $V$ be a finite-dimensional normed space with $\dim V\ge1$ such that either
\begin{itemize}
\item the norm is cylindrical, or
\item $\dim V=2$.
\end{itemize}
If $J\coloneqq J(V)$ is the Jung constant of $V$, and $X\ss V$ such that $d_\h(X,V)<\infty$, then $d_\gh(X,V)\ge\dfrac1{2J}d_\h(X,V)$.
\end{corollary}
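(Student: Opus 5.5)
This corollary should follow directly from Theorem~\ref{thm:gh-lower-jung-new}. The only hypothesis of that theorem beyond $\dim V\ge1$ and $d_\h(X,V)<\infty$ is that $V$ satisfies the intersection property, so the plan is to verify this property in each of the two cases and then apply the theorem as stated.

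In the first case, the norm is cylindrical: its unit ball is the product $B^2\x K^{n-2}$ of a planar unit ball and a cube. Lemma~\ref{lem:CylNorms} (from~\cite{Ilyukhin}) says exactly that such a norm on $\R^n$ has the intersection property. Since $V$ is linearly isometric to $\R^n$ with this norm, and the intersection property, the Jung constant, and both distances $d_\h$ and $d_\gh$ are invariant under linear isometries, we may work in $\R^n$.

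In the second case, $\dim V=2$, and Lemma~\ref{lem:ContrInter} gives the intersection property directly. In both cases Theorem~\ref{thm:gh-lower-jung-new} then yields $d_\gh(X,V)\ge\frac1{2J}d_\h(X,V)$ with $J=J(V)$.

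There is no real obstacle here; the substantive work lies in the cited lemmas from~\cite{Ilyukhin} and in Theorem~\ref{thm:gh-lower-jung-new}. The only points to check are bookkeeping. First, the dimension convention in Lemma~\ref{lem:CylNorms} needs $n\ge2$; for $n=2$ a cylindrical norm is just a planar norm, which is the second case anyway. Second, the case $\dim V=1$ is not covered by either bullet unless one counts it as degenerately cylindrical, but it does not need to be. In dimension~$1$ every norm is a multiple of the absolute value, which is a $\max$-norm, so Lemma~\ref{lem:MaxNorm} applies. The remark after Theorem~\ref{thm:KleeGarkavi} also gives $J=1/2$ there, so the one-dimensional case is consistent with the statement.
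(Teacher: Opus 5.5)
Your proposal is correct and matches how the paper obtains this corollary. The paper states it without a separate proof, as an immediate consequence of Theorem~\ref{thm:gh-lower-jung-new}, with the intersection property supplied by Lemma~\ref{lem:CylNorms} for cylindrical norms and by Lemma~\ref{lem:ContrInter} for $\dim V=2$. Your extra remark on $\dim V=1$ is correct and harmless, though that case lies outside the corollary's hypotheses.
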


\let\ss\sss


\begin{thebibliography}{99}

\bibitem{GH-BU-VR}
H.~Adams, J.~Bush, N.~Clause, F.~Frick, M.~G\'{o}mez, M.~Harrison, R.A.~Jeffs, E.~Lagoda, S.~Lim, F.~M\'{e}moli, M.~Moy, N.~Sadovek, M.~Superdock, D.~Vargas, Q.~Wang, and L.~Zhou.
\newblock Gromov--{H}ausdorff distances, {B}orsuk--{U}lam theorems, and {V}ietoris--{R}ips complexes.
\newblock Accepted to appear in \emph{Algebraic \& Geometric Topology}, 2026.

\bibitem{HvsGH}
H.~Adams, F.~Frick, S.~Majhi, and N.~McBride.
\newblock Hausdorff vs {G}romov--{H}ausdorff distances.
\newblock {\em Discrete \& Computational Geometry}, 75:1217--1246, 2026.

\bibitem{AdamsMajhiManinVirkZava} H.~Adams, S.~Majhi, F.~Manin, Z.~Virk, and N.~Zava.
\newblock Lower-bounding the Gromov--Hausdorff distance in metric graphs.
\newblock {\em ArXiv e-prints}, 2024, arXiv:2411.09182 [math.MG].

\bibitem{AlimovTsarkov2019}
A.R.~Alimov and I.G.~Tsar'kov.
\newblock Chebyshev centres, Jung constants, and their applications.
\newblock {\em Russian Mathematical Surveys}, 74(5):775--849, 2019.

\bibitem{Amir1985}
D.~Amir.
\newblock On Jung's constant and related constants in normed linear spaces.
\newblock {\em Pacific Journal of Mathematics}, 118(1--15):359--371, 1985.

\bibitem{Berdyshev1998}
S.V.~Berdyshev.
\newblock The relative Jung constant in the space $l_n^\infty$.
\newblock {\em Tr.\ Inst.\ Mat.\ Mekh.\ \(Ekaterinburg\/\)}, 5:97--103, 1998.
\newblock (In Russian).

\bibitem{Bohnenblust}
F.~Bohnenblust.
\newblock Convex regions and projections in Minkowski spaces.
\newblock {\em Ann.\ Math.}, 39:301-308, 1938.

\bibitem{B75}
R.D.~Bourgin.
\newblock Approximate isometries on finite dimensional {B}anach spaces.
\newblock {\em Trans.\ Amer.\ Math.\ Soc.}, 207:309--328, 1975.

\bibitem{BBI}
D.~Burago, Yu.~Burago, and S.~Ivanov.
\newblock A Course in Metric Geometry.
\newblock {\em Graduate Studies in Mathematics}, 33, AMS, Providence, RI, 2001.

\bibitem{ChazalDeSilvaOudot2014}
F.~Chazal, V.~de~Silva, and S.~Oudot.
\newblock Persistence stability for geometric complexes.
\newblock {\em Geometriae Dedicata}, 174:193--214, 2014.

\bibitem{danzer1963helly}
L.~Danzer, B.~Gr{\"u}nbaum, and V.~Klee.
\newblock Helly's theorem and its relatives.
\newblock In {\em Proceedings of Symposia in Pure Mathematics}, pages 101--180.
  American Mathematical Society, 1963.

\bibitem{Dol'nikov}
V.L.~Dol'nikov. 
\newblock Jung constant in $\ell^n_1$.
\newblock {\em Math.\ Notes}, 42(4):787--791, 1987.

\bibitem{Garkavi1964}
A.L.~Garkavi.
\newblock On the Chebyshev center and convex hull of a set.
\newblock {\em Uspekhi Mat.\ Nauk}, 19(6(120)):139--145, 1964.
\newblock (In Russian).

\bibitem{GrigorevIvanovTuzhilin}
D.S.~Grigor'ev, A.O.~Ivanov, and A.A.~Tuzhilin.
\newblock Gromov--Hausdorff distance to simplexes.
\newblock {\em ArXiv e-prints}, 2019, arXiv:1906.09644v1 [math.MG].

\bibitem{Gulevich}
N.M.~Gulevich.
\newblock On measure of nonconvexity and {J}ung constant.
\newblock {\em J.\ Math.\ Sci.}, 81(2):2562--2566, 1996.

\bibitem{harrison2023quantitative}
M.~Harrison and R.A.~Jeffs.
\newblock Quantitative upper bounds on the {G}romov--{H}ausdorff distance between spheres.
\newblock {\em ArXiv e-prints}, 2023, arXiv:2309.11237 [math.MG].

\bibitem{Hatcher}
A.~Hatcher. 
\newblock  Algebraic topology.
\newblock {\em Cambridge University Press}, 2002.

\bibitem{Ilyukhin}
D.A.~Ilyukhin.
\newblock A problem of intersection of balls in normed space.
\newblock {\em ArXiv e-prints}, 2026, arXiv:2606.27583 [math.MG].

\bibitem{IvaMikhTuz} A.O.~Ivanov, I.N.~Mikhailov, and A.A.~Tuzhilin.
\newblock Gromov-Hausdorff Geometry of Metric Trees.
\newblock {\em ArXiv e-prints}, 2024, arXiv:2412.18888 [math.MG].

\bibitem{JiTuzhilin}
Y.~Ji and A.A.~Tuzhilin.
\newblock Gromov-Hausdorff Distance Between Segment and Circle.
\newblock {\em ArXiv e-prints}, 2021, arXiv:2101.05762 [math.MG].

\bibitem{jung1910kleinsten}
H.W.~Jung.
\newblock {\"U}ber die kleinste Kugel, die eine r\"aumliche Figur einschliesst.
\newblock {\em J.~Reine Angew.\ Math.}, 123:241--257, 1901.

\bibitem{Klee1960}
V.L.~Klee.
\newblock Circumspheres and inner products.
\newblock {\em Math.\ Scand.}, 8(2):363--370, 1960.

\bibitem{LimMemoliSmith}
S.~Lim, F.~M\'{e}moli, and Z.~Smith.
\newblock The Gromov--Hausdorff distance between spheres.
\newblock {\em Geometry \& Topology}, 2023, v.\ 27, N 9, pp.\ 3733--3800.

\bibitem{Martin}
S.R.~Martin.
\newblock Some novel constructions of Gromov-Hausdorff-optimal correspondences between spheres.
\newblock {\em ArXiv e-prints}, 2025, arXiv:2409.02248v2 [math.MG].

\bibitem{MikhTuz}
I.N.~Mikhailov, A.A.~Tuzhilin.
\newblock When the Gromov--Hausdorff distance between finite-dimensional space and its subset is finite?
\newblock {\em Communications in Mathematical Research}, 2025, v.\ 41, iss. 1, pp.\, 1--8.

\bibitem{Routledge} N.~Routledge.
\newblock A result in Hilbert space.
\newblock {\em Quart.\ J.\ Math.}, 3:12--18, 1952.

\bibitem{Stone} A.H.~Stone.
\newblock Paracompactness and product spaces.
\newblock {\em Bulletin of the American Mathematical Society}, 1948, v.\ 54, N 10, pp.\ 977--982.

\end{thebibliography}
\end{document}